\newtheorem{theorem}{Theorem}[section]
\newtheorem{corollary}[theorem]{Corollary}
\newtheorem{proposition}[theorem]{Proposition}
\theoremstyle{definition}
\newtheorem{definition}[theorem]{Definition}
\theoremstyle{definition}
\newtheorem{remark}[theorem]{Remark}
\newtheorem{example}[theorem]{Example}
\newtheorem{application}[theorem]{Application}
\newcommand{\D}{\mathcal{D}}
\newcommand{\Sp}{\mathcal{S}}
\newcommand{\op}{\mathrm{op}}
\newcommand{\Z}{\mathbb{Z}}
\newcommand{\U}{\mathcal{U}}
\newcommand{\V}{\mathcal{V}}
\newcommand{\I}{\mathcal{I}}
\newcommand{\R}{\mathbb{R}}
\newcommand{\HR}{\mathrm{H}\mathbb{R}}
\newcommand{\Top}{\mathrm{Top}}
\newcommand{\Ch}{\mathrm{Ch}}
\renewcommand{\lim}{\mathrm{lim}}
\newcommand{\colim}{\mathrm{colim}}
\newcommand{\hocolim}{\mathrm{hocolim}}
\newcommand{\ch}{\mathcal{D}}
\newcommand{\W}{\mathcal{W}}
\newcommand{\ImFU}{\mathcal{T}(\widetilde{F}_{\U})}
\newcommand{\G}{\widetilde{G}}
\newcommand{\pt}{\star}
\begin{document}

\title[Bounded cohomology and homotopy colimits]{Bounded cohomology and homotopy colimits} %

\author[G. Raptis]{George Raptis}
\address{\newline G. Raptis \newline
Fakult\"{a}t f\"ur Mathematik \\
Universit\"{a}t Regensburg \\
93040 Regensburg, Germany}
\email{georgios.raptis@ur.de}

\begin{abstract}
The comparison map from bounded cohomology to singular cohomology plays an important role in the study of bounded cohomology theory and its applications. The \emph{vanishing} and \emph{covering} theorems of Gromov and Ivanov show interesting and useful properties of the comparison map under appropriate assumptions. We discuss an approach to these theorems, based on the general homotopy-theoretic properties of the comparison map, and obtain new proofs and refined versions of these results.   
\end{abstract}

\maketitle

\setcounter{tocdepth}{1} 

\tableofcontents

\section{Introduction}

Bounded cohomology of topological spaces, introduced by Gromov \cite{Gr},  is a variant of singular cohomology which has deep connections with geometry 
and group theory. The bounded cohomology of a topological space $X$ (with coefficients in $\R$) is defined by restricting to those singular $\R$-cochains $\phi \in C^n(X; \R)$, $n \geq 0$, whose values $\{\phi(\sigma) \ | \ \Delta^n \xrightarrow{\sigma} X\}$ in $\R$ form a bounded subset.
This seemingly small modification turns out to have significant consequences and leads to a theory which differs substantially from 
singular cohomology. On the one hand, the singular $\R$-cochain complex
$$C^{\bullet}(-; \R) \colon \Top^{\op} \to \Ch_{\R}, \ X \mapsto C^{\bullet}(X; \R),$$
preserves weak equivalences and is excisive, i.e., the functor $C^{\bullet}(-;\R)$ sends weak homotopy equivalences to quasi-isomorphisms, and homotopy colimits of topological spaces to homotopy limits of cochain complexes. These fundamental properties (which are also true for general coefficients) allow us to determine and compute the singular cohomology of $X$ from the singular cohomologies of the spaces in a homotopy colimit decomposition of $X$. On the other hand, the functor of bounded  $\R$-cochains 
$$C^{\bullet}_b(-; \R) \colon \Top^{\op} \to \Ch_{\R}, \ X \mapsto C^{\bullet}_b(X; \R),$$
preserves weak equivalences \cite{Iv}, but it is not excisive. The failure to satisfy excision makes bounded cohomology difficult to compute in general. 

\smallskip

The inclusion of bounded $\R$-cochains into all singular $\R$-cochains defines a natural comparison map of cochain complexes
$$c_X \colon C^{\bullet}_b(X; \R) \rightarrow C^{\bullet}(X; \R)$$
and corresponding comparison maps after passing to cohomology
$$c_X^n \colon H^n_b(X; \R) \rightarrow H^n(X; \R).$$
The study of the properties of these comparison maps to singular cohomology is an important tool in bounded cohomology theory and its applications. Moreover, these comparison maps yield information about the simplicial volume \cite{Gr}: if $X$ is an oriented closed connected $n$-manifold, then the simplicial volume $||X||$ of $X$ vanishes if and only if $c^n_X$ is the zero map.  

Gromov \cite{Gr} and Ivanov \cite{Iv, Iv2} established interesting properties of the comparison maps $\{c^n_X\}_{n \geq 0}$ in the case where $X$ admits an open cover with suitable properties (see also \cite{FM, LS}). We recall that an open cover $\U = \{U_i\}_{i \in I}$ of a topological space $X$ is called \emph{amenable} if the image of the homomorphism $\pi_1(U_i, x) \to \pi_1(X, x)$ is amenable for every $i \in I$ and $x \in U_i$; we do not assume here that $U_i$ is path-connected. For any open cover $\U = \{U_i \}_{i \in I}$ of $X$, we denote by $\I_{\U}$ the poset of finite subsets $\sigma \subseteq I$ such that $U_{\sigma} : = \bigcap_{i \in \sigma} U_i \neq \varnothing$, ordered by reverse inclusion of subsets. Let $|\I_{\U}|$ denote the classifying space of $\I_{\U}$ -- this is the geometric realization of the nerve of $\I_{\U}$, that is, the simplicial complex associated to the poset of chains in $\I_{\U}$.

\begin{theorem}[Gromov \cite{Gr}, Ivanov \cite{Iv}] \label{nerve_theorem}
Let $X$ be a path-connected topological space and let $\U = \{U_i\}_{i \in I}$ be an amenable open cover of $X$. Then the following hold:
\begin{itemize}
\item[(1)](Covering theorem) Suppose that $U_{\sigma}$ is path-connected for every $\sigma \in \I_{\U}$. Then there are $\R$-linear maps 
$$\phi^n \colon H^n_b(X; \R) \rightarrow H^n(|\I_{\U}|; \R), \ \ n \geq 0,$$
such that the following diagrams commute:
$$
\xymatrix{
H^n_b(X; \R) \ar[r]^{c^n_X} \ar[rd]_{\phi^n} & H^n(X; \R). \\
& H^n(|\I_{\U}|; \R) \ar[u]_{H^n(p_{F_{\U}}; \R)} 
}
$$
\item[(2)](Vanishing theorem) If the dimension of $|\I_{\U}|$ is $\leq m-1$, then the comparison map $c^n_X$ vanishes for $n \geq m$. 
\end{itemize}
\end{theorem}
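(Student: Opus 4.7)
The plan is to deduce both parts from a single homotopy-theoretic factorization of $c_X$ through the singular cochain complex of the nerve $|\I_\U|$. The three ingredients are (i) the standard homotopy colimit decomposition of $X$ associated with the open cover, (ii) the excisiveness of singular cochains, and (iii) the amenability hypothesis, which collapses each pointwise comparison $c_{U_\sigma}$ onto the constant part of the diagram.

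First I would recall that for any open cover $\U$ the canonical map $\hocolim_{\sigma \in \I_\U} U_\sigma \xrightarrow{\simeq} X$ is a weak equivalence, and collapsing each $U_\sigma$ to a point exhibits $p_{F_{\U}} \colon X \to |\I_\U|$ as the induced map to the homotopy colimit of the terminal diagram. Applying $C^\bullet(-;\R)$ produces, by excision, a natural quasi-isomorphism
$$C^\bullet(X; \R) \xrightarrow{\simeq} \holim_{\sigma \in \I_\U^{\op}} C^\bullet(U_\sigma; \R),$$
while applying $C^\bullet_b(-;\R)$ only yields a natural map into the corresponding homotopy limit of bounded cochain complexes, which is not in general a quasi-isomorphism. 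These two fit, via the pointwise comparisons $c_{U_\sigma}$, into a natural commutative square of chain complexes.

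The crux is to use amenability to promote the right-hand vertical to a map factoring through the constant diagram at $\R$. Under the hypotheses of (1), each $U_\sigma$ is path-connected with amenable image $G_\sigma \subseteq \pi_1(X)$, so by Ivanov's mapping theorem the restriction $H^n_b(X;\R) \to H^n_b(U_\sigma;\R)$ factors through $H^n_b(BG_\sigma;\R) = 0$ for $n \geq 1$. Consequently the composite $c_{U_\sigma} \circ \mathrm{res}_\sigma \colon C^\bullet_b(X;\R) \to C^\bullet(U_\sigma;\R)$ is quasi-isomorphic to the inclusion of the constant subcomplex $\R \hookrightarrow C^\bullet(U_\sigma;\R)$, coherently in $\sigma$. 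Taking $\holim$ over $\I_\U^{\op}$ and identifying the homotopy limit of the constant $\R$-diagram with $C^\bullet(|\I_\U|;\R)$ then yields the desired factorization
$$c_X \colon C^\bullet_b(X;\R) \xrightarrow{\phi} C^\bullet(|\I_\U|;\R) \xrightarrow{p_{F_{\U}}^*} C^\bullet(X;\R).$$
Part (1) follows on taking cohomology, and part (2) is immediate from $H^n(|\I_\U|;\R) = 0$ for $n \geq m$ whenever $\dim|\I_\U| \leq m-1$.

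The main obstacle is making the word \emph{coherently} in the amenability step precise: pointwise cohomological vanishing does not by itself produce a morphism of $\I_\U^{\op}$-diagrams of cochain complexes factoring through the constant diagram at $\R$. I would handle this rectification by passing to the projective (or injective) model structure on diagrams of cochain complexes over $\I_\U^{\op}$, equivalently the $\infty$-category of such diagrams, where a coherent family of quasi-isomorphisms into a constant target can be rigidified into a genuine map of diagrams whose $\holim$ produces the factorization above. Organising this rectification functorially in $\U$ — so that the comparison map, rather than merely its cohomological shadow, is handled at the level of diagrams — is the technical heart of the argument.
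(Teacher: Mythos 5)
Your overall strategy coincides with the paper's: decompose $X$ as a homotopy colimit over $\I_{\U}$, use excision for singular cochains, and use amenability to collapse the bounded side onto the constant diagram. But the step you yourself flag as the ``technical heart'' is a genuine gap, and the fix you propose does not close it. Rectification via the projective or injective model structure on $\I_{\U}^{\op}$-diagrams lets you strictify data that is \emph{already} homotopy coherent; it does not manufacture coherence out of the pointwise cohomological statements you actually have. The Mapping Theorem, as you invoke it, only says that each composite $H^n_b(X;\R)\to H^n_b(U_\sigma;\R)$ factors through something vanishing in positive degrees \emph{after passing to cohomology}; it hands you no chain-level factorization of $c_{U_\sigma}\circ\mathrm{res}_\sigma$ through the constants, let alone one natural in $\sigma$. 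The missing idea is to interpose a genuine diagram of spaces: apply the Moore--Postnikov factorization at $\pi_1$ to each inclusion, $U_\sigma\to V_\sigma\to X$, which is strictly natural in $\sigma$ and so yields a diagram $\sigma\mapsto V_\sigma$ in $\Sp_{/X}$ receiving a map from $\sigma\mapsto U_\sigma$. Naturality of $c$ then factors $c_X$ through $\holim_{\I_{\U}^{\op}} C^{\bullet}_b(V_\sigma;\R)$ with no coherence problem, and the Mapping Theorem enters only as the pointwise statement that each $V_\sigma$ (connected, with amenable fundamental group) is boundedly acyclic; hence $\delta(\ast)\to(\sigma\mapsto V_\sigma)$ induces an equivalence on $\holim C^{\bullet}_b$, and combining with $\holim_{\I_{\U}^{\op}}C^{\bullet}(\ast;\R)\simeq C^{\bullet}(|\I_{\U}|;\R)$ gives the factorization you want.

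There is a second gap in part (2): you deduce it from the factorization through $C^{\bullet}(|\I_{\U}|;\R)$, but that factorization is only available when the $U_\sigma$ are path-connected, which (2) does not assume. Without that hypothesis $V_\sigma$ may be disconnected, possibly with infinitely many components, $H^0_b(V_\sigma;\R)$ is a large product rather than $\R$, and there is no comparison with $C^{\bullet}(|\I_{\U}|;\R)$. The correct argument factors $c_X$ through $\holim_{\I_{\U}^{\op}}C^{\bullet}_b(V_\sigma;\R)$, shows that each $C^{\bullet}_b(V_\sigma;\R)$ is concentrated in degree $0$ (for infinitely many amenable components this needs a uniform boundary condition argument, not just the Mapping Theorem applied componentwise), and then proves that a homotopy limit, over a poset whose nerve has dimension $\leq m-1$, of complexes concentrated in degree $0$ has cohomology concentrated in degrees $\leq m-1$. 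That last homological-algebra step is where the dimension bound actually enters, and it is absent from your outline.
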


Theorem \ref{nerve_theorem}(2) is originally due to Gromov \cite{Gr}; several proofs of this result have appeared in the literature (see Ivanov\cite{Iv}, Frigerio--Moraschini \cite{FM} and L\"oh-Sauer \cite{LS}). Theorem \ref{nerve_theorem}(1) is originally due to Ivanov (see \cite{Iv}); alternative proofs of this result can also be found in \cite{FM, LS}. More recently, Ivanov \cite{Iv2} obtained generalizations of Theorem \ref{nerve_theorem}(1) to more general classes of covers. 

\smallskip

In this paper we present another approach to Theorem \ref{nerve_theorem} which also leads to stronger versions of the theorem and generalizations. Let us outline the general idea of this approach in the specific context of Theorem \ref{nerve_theorem}. Note that for any open cover $\U = \{U_i \}_{i \in I}$ of $X$, there is an associated diagram of topological spaces
$$F_{\U} \colon \I_{\U} \to \Top, \ \sigma \mapsto U_{\sigma}: = \bigcap_{i \in \sigma} U_i.$$
A fundamental property of $F_{\U}$ is that it is a homotopy colimit diagram, i.e., the canonical map $$\hocolim_{\I_{\U}} F_{\U} \rightarrow X$$
is a weak homotopy equivalence \cite{DI}.  In addition, for the constant diagram at the one-point space $\ast$,
$$\delta(\ast) \colon \I_{\U} \to \Top, \ \sigma \mapsto \ast,$$
there is a canonical weak homotopy equivalence 
$\hocolim_{\I_{\U}} \delta(\ast) \simeq |\I_{\U}|.$
The natural transformation $F_{\U} \to \delta(\ast)$ yields the canonical map (that appears in Theorem \ref{nerve_theorem})
$$p_{F_{\U}} \colon X \simeq \hocolim_{\I_{\U}} F_{\U} \to \hocolim_{\I_{\U}} \delta(\ast) \simeq |\I_{\U}|.$$
Based on the naturality of $c_X$ and using the fact that singular cohomology is excisive, it is possible to obtain factorizations of $c_X$ from natural factorizations of the maps $U_{\sigma} \subseteq X$. In the case of Theorem \ref{nerve_theorem}, the key assumption that $\U$ is amenable implies that for each inclusion $U_{\sigma} \subseteq X$, $\sigma \in \I_{\U}$, the natural Moore--Postnikov factorization (at $\pi_1$)
$$U_{\sigma} \to V_{\sigma} \to X$$
has the property that the bounded cohomology of $V_{\sigma}$ is concentrated in degree $0$ -- this is a consequence of Gromov's fundamental \emph{Mapping Theorem} \cite{Gr, Iv, FM}. Moreover, if $U_{\sigma}$ is path-connected, then $V_{\sigma}$ has the (bounded) cohomology of the point.  These properties yield information about the homotopy limit of the diagram of bounded cochain complexes $\sigma \mapsto C^{\bullet}_b(V_{\sigma}; \R)$. In particular, passing to the homotopy limit (for $\sigma \in \I_{\U}$) in the diagrams of cochain complexes
$$
\xymatrix{
C^{\bullet}_b(X; \R) \ar[d]  \ar@/^3pc/[rrdd]^{c_X} & & \\
C^{\bullet}_b(V_{\sigma}; \R) \ar[r]^{c_{V_{\sigma}}} & C^{\bullet}(V_{\sigma};\R) \ar[d] & \\
& C^{\bullet}(U_{\sigma};\R) &   C^{\bullet}(X;\R) \ar[l]
}
$$
we obtain a canonical factorization of $c_X$ with the required properties. This approach to Theorem \ref{nerve_theorem}, based on homotopy colimits and the  properties of $c_X$, leads to refinements of Theorem \ref{nerve_theorem} in several directions:
\begin{itemize}
\item[(a)] The context of Theorem \ref{nerve_theorem} extends to general homotopy (or $\R$-cohomology) colimit decompositions of $X$. For each such homotopy colimit diagram that is equipped with a factorization (in the sense of Section \ref{sec_factorizations}), there is a corresponding canonical factorization of $c_X$ (Proposition \ref{factorization_general}). 

Moreover, for factorizations of the homotopy colimit diagram which satisfy certain vanishing properties, our generalized versions of the \emph{Covering theorem} (Theorem \ref{nerve_theorem_general1}) and of the \emph{Vanishing theorem} (Theorem \ref{nerve_theorem_general2}) show specific properties of the induced factorization of $c_X$ analogous to Theorem \ref{nerve_theorem}. The precise connection with Theorem \ref{nerve_theorem} is discussed in Section \ref{amenable_covers}.    
\item[(b)] The method yields canonical factorizations of the comparison map $c_X$ in the homotopy theory of cochain complexes before passing to cohomology. 

Working directly with cochain complexes provides the statements with more structure and allows for new variations of the results by direct application of homotopy-theoretic constructions. In Section \ref{pullback_fact}, we discuss some examples of this in the context of \emph{parametrized factorizations} (Subsections \ref{general_parametrized} and \ref{pullback_covers})  as well as for relative bounded cohomology (Subsection \ref{subsec:rel-cov-van}). 
\item[(c)] The same method applies similarly to factorizations of homotopy (or $\R$-cohomology) colimit decompositions which satisfy bounded acyclicity only in appropriate ranges of degrees. Specifically, our version of the \emph{Covering theorem} (Theorem \ref{nerve_theorem_general1}) treats also \emph{boundedly $k$-acyclic factorizations} and our version of the \emph{Vanishing theorem} (Theorem \ref{nerve_theorem_general2}) applies also to \emph{boundedly $k$-truncating factorizations}. Amenable covers give rise to $0$-truncating factorizations (see Section \ref{amenable_covers}). In view of the renewed interest and the recent advances in bounded acyclicity, these technical refinements of Theorem \ref{nerve_theorem} could potentially prove useful for future applications. 
\end{itemize}
As the general method is essentially based only on the general properties of the comparison map $c_X$, similar arguments can also be used for other such examples of natural transformations. The main ingredient of our approach is to view the comparison map as a natural map of cochain complexes and use its homotopy-theoretic properties in order to identify it as the \emph{coassembly map for bounded cohomology} (see Subsection \ref{sec:comparison_map}).  For another related example where the same method can be applied, let us mention the dual context of the comparison map for $\ell^1$-homology
$$H_*(X; \R) \to H^{\ell^1}_*(X;\R)$$
which can be treated similarly and the proofs are essentially the same. Further generalizations are discussed briefly in Subsection \ref{generalizations}.

Since several proofs of Theorem \ref{nerve_theorem} (and its variations) are available in the literature, this paper makes no great claim to originality. The main purpose of the paper is to show how a specific homotopy-theoretic viewpoint offers another perspective on Theorem \ref{nerve_theorem} and leads naturally to stronger versions of this result with short proofs; moreover, we hope that the methods might be of further interest in the study of the homotopy-theoretic aspects of bounded cohomology and its applications. 

\medskip

\noindent \textbf{Acknowledgements.} I would like to thank Clara L\"oh and Marco Moraschini for many interesting discussions about bounded cohomology and for their useful comments on this work. This work was partially supported by \emph{SFB 1085 -- Higher Invariants} (Universit\"at Regensburg) funded by the DFG. 

\section{Preliminaries} \label{sec:prelim}

We briefly review the context of the comparison map for bounded cohomology using the language $\infty$-categories; the theory of $\infty$-categories is 
not necessary for obtaining our results, but it will allow us here, as in many other of its applications, to identify and focus on the essential homotopy-theoretic features of the problem. We refer the interested reader to \cite[Ch. 1]{HTT} for an introduction to $\infty$-categories. 
 
\subsection{Spaces and chain complexes} Let $\Sp$ denote the $\infty$-category of spaces (see, for example, \cite{HTT}). It will be convenient to use 
the following model for $\Sp$. Let $\Top$ denote the (ordinary) category of topological spaces and let $\W_{\mathrm{whe}}$ be the class of weak homotopy equivalences in $\Top$. We define 
$$\Sp : = \Top[\W_{\mathrm{whe}}^{-1}]$$
to be the $\infty$-categorical localization of $\Top$ at $\W_{\mathrm{whe}}$ \cite[Ch. 7]{Ci}, \cite{Hi}.  There is no substantive difference between the approach using $(\Top, \W_{\mathrm{whe}})$ (as a category with weak equivalences or a model category) and the associated $\infty$-category $\Sp$.

Let $\Ch_{\R}$ denote the (ordinary) category of chain complexes of $\R$-vector spaces and let $\W_{\mathrm{qi}}$ be the class of quasi-isomorphisms in $\Ch_ {\R}$. We consider similarly the model for the derived $\infty$-category $\ch(\R)$ that is given by the $\infty$-categorical localization of $\Ch_{\R}$ at $\W_{\mathrm{qi}}$:
$$\ch(\R) : = \Ch_{\R}[\W_{\mathrm{qi}}^{-1}].$$

\subsection{(Co)limit diagrams} Given a simplicial set $\I$ (e.g., the nerve of a small category), we denote by $\I^{\triangleright} := \I \star \Delta^0$ the associated simplicial set (or the nerve of the associated category) that is defined by joining a terminal object $\pt$ to $\I$. 

A cone on a diagram $F \colon \I \to \Sp$ is a diagram $\widetilde{F} \colon \I^{\triangleright} \to \Sp$ extending $F$. A colimit diagram $\widetilde{F} \colon \I^{\triangleright} \to \Sp$ is a cone on $F = \widetilde{F}_{|\I} \colon \I \to \Sp$ which is initial in the $\infty$-category $\Sp_{F/}$ of cones on the diagram $F$. Limits in $\Sp$ (or in any other $\infty$-category) are defined dually. For a colimit diagram $\widetilde{F} \colon \I^{\triangleright} \to \Sp$ of $F$, we will sometimes refer to the cone object $\widetilde{F}(\pt) = X \in \Sp$ as the colimit of $F$ and denote this by $\colim_{\I} F$. 
Every small diagram in $\Sp$ (or in $\ch(\R)$) admits a (co)limit (see \cite{Ci, HTT}). 

We say that a cone $\widetilde{F} \colon \I^{\triangleright} \to \Sp$ is an \textit{$\HR$-colimit diagram} on $F = \widetilde{F}_{|\I} \colon \I \to \Sp$ if the canonical map in $\Sp$
$$\colim_{\I} F \to \widetilde{F}(\pt)$$
induces an isomorphism in singular $\R$-cohomology. A colimit diagram is clearly an $\HR$-colimit diagram.

Given a diagram $F \colon \I \to \Sp$, a (colimit, $\HR$-colimit) diagram $\widetilde{F} \colon \I^{\triangleright} \to \Sp$ which extends $F$ and has cone object $X \in \Sp$ determines (and is determined by) its adjoint diagram in the $\infty$-category $\Sp_{/X}$ of spaces over $X$,  also denoted by 
$$\widetilde{F} \colon \I \to \Sp_{/X}, \ i \mapsto (F(i) \to X).$$
We will call a diagram in $\Sp_{/X}$ that arises in this way from a colimit diagram in $\Sp$ (resp. $\HR$-colimit diagram) again a \emph{colimit diagram}  (resp. $\HR$\emph{-colimit diagram}). 

We refer to \cite{Ci, HTT} for a detailed exposition of the properties of (co)limits in higher category theory. There is no essential difference between such colimit diagrams and homotopy colimit diagrams in $\Top$ in the standard sense (see \cite[Ch. 4]{HTT} and \cite[Ch. 7]{Ci} for an analysis of this comparison). We also recommend \cite{Du} and \cite{DHKS} for the theory of homotopy (co)limits in homotopical algebra.

\begin{example} For a simplicial set $\I$, we denote the associated space ($\infty$-groupoid) by $|\I|$;  this has the weak homotopy type of (the classifying space of) $\I$ (when $\I$ corresponds to the nerve of an ordinary category).  The colimit of the constant diagram at the one-point/contractible space, 
$$\delta(\ast) \colon \I \to \Sp, \ \ i \mapsto \ast,$$ is canonically identified with $|\I|$, i.e., $\colim_{\I} \delta(\ast) \simeq |\I|.$

\noindent For every diagram $F \colon \I \to \Sp$, there is an obvious natural transformation $F \rightarrow \delta(\ast)$ which induces the canonical map: 
$$p_F \colon \colim_{\I} F \to \colim_{\I} \delta(\ast) \simeq |\I|.$$
\end{example}

\begin{example}[Open covers] \label{covering-colim} Let $\U = \{U_i \}_{i \in I}$ be an open cover of a topological space $X$. For a subset $\sigma \subseteq I$, we write $U_{\sigma} := \bigcap_{i \in \sigma} U_i \subseteq X$.  Let $\I_{\U}$ denote the (nerve of the) poset of finite subsets $\sigma \subseteq I$ such that $U_{\sigma} \neq \varnothing$, ordered by reverse inclusion of subsets.  Passing to the $\infty$-category of spaces, there is an associated diagram:
$$\widetilde{F}_{\U} \colon \I_{\U} \to \Sp_{/X}, \ \sigma \mapsto (U_{\sigma} \xrightarrow{i_{\sigma}} X).$$
It is a fundamental property of the diagram $\widetilde{F}_{\U}$ that it defines a colimit diagram, that is, the canonical map 
\begin{equation*} \label{di} 
\colim_{\sigma \in \I_{\U}} U_{\sigma} \rightarrow X
\end{equation*}
is an equivalence in $\Sp$ (see \cite{DI}). 
\end{example}

\subsection{Singular cohomology} Let $C^{\bullet}(-;\R) \colon \Top^{\op} \to \Ch_{\R}$ denote the usual singular $\R$-cochain complex functor. It will be convenient here to regard the singular cochain complex as a chain complex concentrated in nonpositive degrees.  Since $C^{\bullet}(-;\R)$ sends weak homotopy equivalences to quasi-isomorphisms, we obtain canonically an induced functor
$$C^{\bullet}(-; \R) \colon \Sp^{\op} \to \ch(\R).$$ 
It follows from classical results that this functor is \emph{excisive}, i.e., $C^{\bullet}(-;\R)$ sends colimits in $\Sp$ to limits in $\ch(\R)$ -- the case 
of coproducts is obvious and the case of pushouts follows from the Mayer-Vietoris sequence for singular cohomology. 

\subsection{Bounded cohomology} Let $C^{\bullet}_b(-; \R) \colon \Top^{\op} \to \Ch_{\R}$ denote the functor of the bounded cochain complex \cite{Gr, Iv}, which we regard here as a chain complex concentrated in nonpositive degrees. We recall that a singular cochain 
$$\phi \colon C_n(X; \R) \to \R$$ 
is \emph{bounded} if the subset $\{\phi(\sigma) \in \R \ | \ \sigma \colon \Delta^n \to X\} \subseteq \R$ is bounded. For $n \geq 0$, the bounded cohomology of $X$ is defined as follows: 
$$H^{n}_b(X; \R) := H_{-n}(C^{\bullet}_b(X; \R)).$$ 
Since $C^{\bullet}_b(-; \R)$ sends weak homotopy equivalences to quasi-isomorphisms \cite{Iv}, we obtain canonically an induced functor
$$C^{\bullet}_b(-; \R) \colon \Sp^{\op} \to \ch(\R).$$
(Note that a weak homotopy equivalence of topological spaces induces a simplicial homotopy equivalence between the corresponding singular sets, since these are Kan complexes. In turn, this induces a chain homotopy equivalence between the singular cochain complexes which also restricts to bounded cochains.)

\subsection{The comparison map} \label{sec:comparison_map} The inclusion of bounded cochains $C^{\bullet}_b(-;\R)$ into all singular cochains $C^{\bullet}(-;\R)$ induces a natural transformation between the functors $C^{\bullet}_b(-; \R), C^{\bullet}(-;\R) \colon \Sp^{\op} \to \ch(\R)$,
$$c \colon C^{\bullet}_b(-;\R) \to C^{\bullet}(-;\R).$$
We will refer to this as the \emph{comparison map} (for bounded cohomology). We also denote the induced comparison maps on cohomology by 
$$c^n \colon H^n_b(-; \R) \to H^n(-;\R).$$

A natural transformation $C^{\bullet}_b(-; \R) \to H$ to an excisive functor $H \colon \Sp^{\op} \to \ch(\R)$ is determined essentially uniquely by its restriction to $X = \ast$; this is because every (topological) space can be written canonically as a (homotopy) colimit of (weakly) contractible spaces. The comparison map for bounded cohomology is an equivalence for $X = \ast$, that is, the map 
$$c_{\{\ast\}} \colon C^{\bullet}_b(\ast ; \R) \to C^{\bullet}(\ast ;\R)$$ 
is an equivalence in $\ch(\R)$ (it is, in fact, an isomorphism of chain complexes in $\Ch_{\R}$). This map $c_{\{\ast\}}$ together with the fact that $C^{\bullet}(-;\R)$ is excisive completely determine the comparison map $c$. Note that the comparison map is not an equivalence in general exactly because $C^{\bullet}_b(-;\R)$ fails to be excisive. But $c$ is the initial transformation to an excisive functor. More generally,
for any cocomplete (resp. complete) $\infty$-category $\D$, there is an adjunction of (not necessarily locally small) $\infty$-categories:
$$(-)_{\%} \colon \D \rightleftarrows \mathrm{Fun}(\Sp, \D)\colon \mathrm{ev} \ \ (\text{resp. } \mathrm{ev} \colon \mathrm{Fun}(\Sp^{\op}, \D) \rightleftarrows \D \colon (-)^{\&})$$
where the right (resp. left) adjoint $\mathrm{ev}$ is given by the evaluation at the contractible space $\ast$ and the associated counit (resp. unit) transformation 
$$\smallint \colon (\mathrm{ev}(F))_{\%} \to F \ \ (\text{resp. } \nabla \colon F \to (\mathrm{ev}(F))^{\&})$$
is the \emph{assembly map for $F$}. The functor $(-)^{\&}$ (resp. $(-)_{\%}$) is fully faithful and its essential image consists of the excisive functors. The natural transformation $c$ is an instance of an assembly map in the dual context of contravariant functors 
-- $c$ is the \emph{coassembly map for bounded cohomology}. See \cite{WW} for more details about the general construction of assembly maps and their properties, and \cite[6.3-6.4]{Ci} for related general results in the language of $\infty$-categories. 
  
\section{Factorizations} \label{sec_factorizations}

\subsection{General factorizations} Let $X \in \Sp$ be a space and let $\widetilde{F} \colon \I \to \Sp_{/X}$ be a diagram of spaces over $X$. A \emph{factorization of $\widetilde{F} \colon \I \to \Sp_{/X}$} consists of a diagram $\widetilde{G} \colon \I \to \Sp_{/X}, \ i \mapsto (G(i) \to X),$ together with a natural transformation $\eta \colon \widetilde{F} \to\widetilde{G}$. 

\smallskip

The following proposition shows that factorizations of $\HR$-colimit diagrams yield factorizations of the comparison map for bounded cohomology.

\begin{proposition} \label{factorization_general}
Let $X \in \Sp$ be a space and let $\widetilde{F} \colon \I \to \Sp_{/X}, \ i \mapsto (F(i) \to X),$
be an $\HR$-colimit diagram. For every factorization $\eta \colon \widetilde{F} \to \widetilde{G}$, where 
$$\G \colon \I \to \Sp_{/X}, \ i \mapsto (G(i) \to X),$$
is a diagram of spaces over $X$, there is a canonical factorization of $c_X$ in $\ch(\R)$:
$$
\xymatrix{
C^{\bullet}_b(X; \R) \ar[r]^{c_X} \ar[d] & C^{\bullet}(X; \R). \\
\lim_{\I^{\op}}C^{\bullet}_b(G(-); \R) \ar[r] & \lim_{\I^{\op}} C^{\bullet}(G(-); \R) \ar[u]  
}
$$
(The maps will be specified in the proof below.)
\end{proposition}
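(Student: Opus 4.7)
The strategy is to use the excisive nature of singular cohomology to realize $C^{\bullet}(X;\R)$ as a limit, and then combine this with the natural transformation $\eta$ and the naturality of $c$ to construct and verify the factorization square.

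First, since $\F$ is an $\HR$-colimit diagram and the functor $C^{\bullet}(-;\R) \colon \Sp^{\op} \to \ch(\R)$ is excisive (as recalled in the preliminaries), the canonical cone yields an equivalence
$$C^{\bullet}(X;\R) \xrightarrow{\sim} \lim_{\I^{\op}} C^{\bullet}(F(-);\R)$$
in $\ch(\R)$. This is the bridge that will allow us to define the right-hand vertical map in the square.

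Next, I would define the three maps of the square. The left vertical map
$$C^{\bullet}_b(X;\R) \to \lim_{\I^{\op}} C^{\bullet}_b(G(-);\R)$$
is the canonical map to the limit assembled from $C^{\bullet}_b(-;\R)$ applied pointwise to the structure maps $G(i) \to X$ coming from $\G$ being a diagram in $\Sp_{/X}$. The bottom horizontal map
$$\lim_{\I^{\op}} C^{\bullet}_b(G(-);\R) \to \lim_{\I^{\op}} C^{\bullet}(G(-);\R)$$
is the map induced on limits by the natural transformation $c$, via the pointwise maps $c_{G(i)} \colon C^{\bullet}_b(G(i);\R) \to C^{\bullet}(G(i);\R)$. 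The right vertical map is the composite
$$\lim_{\I^{\op}} C^{\bullet}(G(-);\R) \xrightarrow{\eta^{*}} \lim_{\I^{\op}} C^{\bullet}(F(-);\R) \xleftarrow{\sim} C^{\bullet}(X;\R),$$
where $\eta^{*}$ is induced by $C^{\bullet}(-;\R)$ applied pointwise to $\eta \colon \F \to \G$, and the second arrow is the excision equivalence above (which we invert in $\ch(\R)$).

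Finally, to check that the composite around the square equals $c_X$, it suffices by the universal property of the limit to verify, for each $i \in \I$, that the level-$i$ composite from $C^{\bullet}_b(X;\R)$ to $C^{\bullet}(F(i);\R)$ coincides with $C^{\bullet}_b(X;\R) \xrightarrow{c_X} C^{\bullet}(X;\R) \to C^{\bullet}(F(i);\R)$, the second map being induced by $F(i) \to X$. This is immediate from the naturality of $c$ applied to the composite $F(i) \to G(i) \to X$ in $\Sp_{/X}$. The only subtlety, rather than a real obstacle, is the careful handling of the excision equivalence: one must work in $\ch(\R)$ in order to invert it, and thereafter the entire verification is a formal chase using naturality of $c$ and the universal property of the limit.
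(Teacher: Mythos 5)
Your proposal is correct and follows essentially the same route as the paper: the left vertical map is assembled from the structure maps $G(i)\to X$, the bottom map is induced levelwise by $c$, and the right vertical map uses $\eta$ together with the inverted excision equivalence $C^{\bullet}(X;\R)\simeq \lim_{\I^{\op}}C^{\bullet}(F(-);\R)$, with commutativity coming from naturality of $c$. The only cosmetic difference is that the paper routes the argument through the intermediate object $\colim_{\I}G$ and then identifies the resulting square with the limit diagram via excision, whereas you work with the limits directly; the content is the same.
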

\begin{proof}
There is a canonical diagram in $\ch(\R)$:
$$\small{
\xymatrix{
C^{\bullet}_b(X; \R) \ar[d] \ar[rr]^{c_X} && C^{\bullet}(X; \R) \ar[d] \\
C^{\bullet}_b(\colim_{\I} G; \R) \ar[d] \ar[rr] && C^{\bullet}(\colim_{\I} G; \R) \ar[d] \\
C^{\bullet}_b(\colim_{\I} F; \R) \ar[rr] && C^{\bullet}(\colim_{\I} F; \R). \\
}}
$$
The horizonal maps are induced by the natural comparison map; the vertical maps are given by the functoriality of $C^{\bullet}_b(-;\R)$ and $C^{\bullet}(-;\R)$. The right vertical composite
$$C^{\bullet}(X; \R) \xrightarrow{\simeq} C^{\bullet}(\colim_{\I} F; \R)$$ is an equivalence because the canonical map $\colim_{\I} F \rightarrow X$ is an $\R$-cohomology equivalence by assumption. (Note that the left vertical composite will also be an equivalence and the bounded cochain complex $C^{\bullet}_b(X; \R)$ will be a retract of $C^{\bullet}_b(\colim_{\I} G; \R)$ if $\widetilde{F}$ is actually a colimit diagram.) Using the fact that $C^{\bullet}(-;\R)$ is excisive 
and the naturality of the comparison map $c$, the bottom square is identified with the composite diagram: 
$$\small{
\xymatrix{
C^{\bullet}_b(\colim_{\I} G; \R) \ar[d]  \ar[r] & \lim_{\I^{\op}} C^{\bullet}_b(G(-); \R) \ar[d] \ar[r]^(0.36){c_{G(-)}} & \lim_{\I^{\op}} C^{\bullet}(G(-); \R) \xleftarrow{\simeq} C^{\bullet}(\colim_{\I} G; \R)  \ar@<8ex>[d] \ar@<-8ex>[d] \\
C^{\bullet}_b(\colim_{\I} F; \R) \ar[r] & \lim_{\I^{\op}} C^{\bullet}_b(F(-); \R) \ar[r]^(0.36){c_{F(-)}} & \lim_{\I^{\op}} C^{\bullet}(F(-); \R) \xleftarrow{\simeq} C^{\bullet}(\colim_{\I} F; \R). \\
}}
$$
The left and right horizontal maps are the canonical maps to the respective limits; the middle maps are induced by the natural comparison map. 
Then the combination of the two diagrams produces the required factorization. 
\end{proof}

\begin{example}[Tautological factorization] \label{ex3}
Let $\widetilde{F} \colon \I \to \Sp_{/X}, \ i \mapsto (F(i) \to X),$ be an $\HR$-colimit diagram and let $\eta \colon \widetilde{F} = \widetilde{F}$ be the identity. Then we obtain a canonical factorization of the comparison map $c_X$ in $\ch(\R)$:
$$
\xymatrix{
C^{\bullet}_b(X; \R) \ar[r]^{c_X} \ar[d] & C^{\bullet}(X; \R) \\
\lim_{\I^{\op}}C^{\bullet}_b(F(-); \R) \ar[ur]_q & 
}
$$
where the vertical map is the canonical map to the limit and $q$ is the composition
$$\lim_{\I^{\op}} C^{\bullet}_b(F(-); \R) \xrightarrow{c_{F(-)}} \lim_{\I^{\op}} C^{\bullet}(F(-); \R) \simeq C^{\bullet}(\colim_{\I} F; \R) \simeq C^{\bullet}(X; \R).$$
More generally, note that the factorization of $c_X$ in Proposition \ref{factorization_general} happens essentially on the side of bounded cohomology, i.e., the comparison map factors canonically through the limit of the \emph{bounded} cohomologies of a diagram 
of spaces (which depends on $\widetilde{G}$). 
\end{example}

\begin{example}[Moore--Postnikov truncation] \label{Moore-Postnikov}
We recall that every map $f \colon Y \to X$ in $\Sp$ admits canonical \emph{Moore--Postnikov $n$-truncations} \cite[Ch. IX]{Wh}, \cite[IV.3]{GJ}, \cite[6.5]{HTT}; in the case of $\pi_1$, this is a natural factorization of $f$, 
$$Y \xrightarrow{p_f} \mathcal{T}(f) \xrightarrow{j_f} X,$$
which is characterized essentially uniquely by the following properties:
\begin{itemize}
\item[(i)] $p_f$ induces a bijection $\pi_0(Y) \xrightarrow{\cong} \pi_0(\mathcal{T}(f))$. Moreover, for every point $y$ in  $Y$, the induced homomorphism
$$\pi_1(p_f, y) \colon \pi_1(Y, y) \to \pi_1(\mathcal{T}(f), p_f(y))$$
is surjective.  
\item[(ii)] For every point $z$ in $\mathcal{T}(f)$, the homomorphism induced by $j_f$,
$$\pi_1(j_f, z) \colon \pi_1(\mathcal{T}(f), z) \to \pi_1(X, j_f(z)),$$ 
is injective. Moreover, for every $n > 1$, $j_f$ induces an isomorphism 
$$\pi_n(j_f, z) \colon \pi_n(\mathcal{T}(f), z) \xrightarrow{\cong} \pi_n(X, j_f(z)).$$
\end{itemize}
Let $\widetilde{F} \colon \I \to \Sp_{/X}, \ i \mapsto (f(i) \colon F(i) \to X),$ be an $\HR$-colimit diagram. Applying the Moore--Postnikov truncation (at $\pi_1$) to the maps 
$(f(i) \colon F(i) \rightarrow X)$ yields a new diagram 
$$\mathcal{T}(\widetilde{F}) \colon \I \to \Sp_{/X}, \ i \mapsto (\mathcal{T}(f(i)) \xrightarrow{j_{f(i)}} X),$$
together with a factorization $p \colon \widetilde{F} \to \mathcal{T}(\widetilde{F}),$ whose components are given by the maps $p_{f(i)}$ in $\Sp_{/X}$. 
Therefore, by Proposition \ref{factorization_general}, we have a canonical factorization in $\ch(\R)$ as follows:
$$
\xymatrix{
C^{\bullet}_b(X; \R) \ar[r]^{c_X} \ar[d] & C^{\bullet}(X; \R). \\
\lim_{\I^{\op}} C^{\bullet}_b(\mathcal{T}(f(-)); \R) \ar[r] & \lim_{\I^{\op}} C^{\bullet}(\mathcal{T}(f(-)); \R) \ar[u]  
}
$$
\end{example}

\subsection{(Boundedly) acyclic/truncating factorizations} 
The factorization of the comparison map $c_X$ in Proposition \ref{factorization_general} will only be useful in the case of factorizations $\eta \colon \widetilde{F} \to \widetilde{G}$, for which we can determine some interesting properties of the limit of bounded cochain complexes $i \mapsto C^{\bullet}_b(G(i); \R)$. 

We will consider two special classes of factorizations: \emph{boundedly ($k$-)acyclic} and \emph{boundedly ($k$-)truncating} factorizations. 

\begin{definition} \label{def-acyclic-fact}
Let $X \in \Sp$ be a space, let $\widetilde{F} \colon \I \to \Sp_{/X}$ be a diagram, and let $k \geq 0$ be an integer. 
\begin{itemize}
\item[(1)] We say that $\eta \colon \widetilde{F} \to \widetilde{G}$ is an \emph{acyclic factorization of $\widetilde{F}$} (\emph{with respect to $C^{\bullet}_b(-;\R)$}) if the diagram 
$$\widetilde{G} \colon \I \to \Sp_{/X}, \ i \mapsto (G(i) \to X),$$ satisfies the following property: for each $i \in \I$,
the canonical map
$$H^n_b(\ast; \R) \xrightarrow{H^n_b(G(i) \to \ast; \R)} H^n_b(G(i); \R)$$
is an isomorphism for every $n \geq 0$.  
\item[(2)] A factorization $\eta \colon \widetilde{F} \to \widetilde{G}$ is a \emph{k-acyclic factorization of $\widetilde{F}$} (\emph{with respect to $C^{\bullet}_b(-;\R)$}) if the diagram 
$$\widetilde{G} \colon \I \to \Sp_{/X}, \ i \mapsto (G(i) \to X),$$ satisfies the following property: for each $i \in \I$,
the canonical map
$$H^n_b(\ast; \R) \xrightarrow{H^n_b(G(i) \to \ast; \R)} H^n_b(G(i); \R)$$
is an isomorphism for every $n \leq k$ (and a monomorphism in degree $k+1$).  
\end{itemize}
\end{definition}

The factorization in Proposition \ref{factorization_general} has interesting implications when we restrict to ($k$-)acyclic factorizations. As we will see in Section \ref{amenable_covers}, the following theorem is a refinement and generalization of the \emph{Covering theorem} (Theorem \ref{nerve_theorem}(1)). 

\begin{theorem} \label{nerve_theorem_general1}
Let $X \in \Sp$ be a space and let $\widetilde{F} \colon \I \to \Sp_{/X}, \ i \mapsto (F(i) \to X),$ be an $\HR$-colimit diagram.
\begin{itemize}
\item[(1)] If $\eta \colon \widetilde{F} \to \widetilde{G}$ is an acyclic factorization of $\widetilde{F}$, then there is a canonical factorization of $c_X$ in $\ch(\R)$:
$$
\xymatrix{
C^{\bullet}_b(X; \R) \ar[rr]^{c_X} \ar[dr] && C^{\bullet}(X; \R) \\
& C^{\bullet}(|\I|; \R) \ar[ur]_{\rho_F} &
}
$$
where $\rho_F \colon C^{\bullet}(|\I|; \R) \xrightarrow{C^{\bullet}(p_{F}; \R)} C^{\bullet}(\colim_{\I} F; \R) \simeq C^{\bullet}(X; \R).$ 
\item[(2)] If $\eta \colon  \widetilde{F} \to \widetilde{G}$ is a $k$-acyclic factorization of $\widetilde{F}$, for some $k \geq 0$, then for every $n \leq k$ there is a canonical factorization of $c^n_X$:
$$
\xymatrix{
H^n_b(X; \R) \ar[rr]^{c^n_X} \ar[dr] && H^n(X; \R) \\
& H^n(|\I|; \R) \ar[ur]_{\rho^n_F} &
}
$$
where $\rho^n_F$ is induced by $\rho_F$ as defined in (1).
\end{itemize}
\end{theorem}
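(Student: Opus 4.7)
The plan is to deduce both parts from Proposition \ref{factorization_general} by analyzing $\lim_{\I^{\op}} C^{\bullet}_b(G(-); \R)$ under the acyclic (resp. $k$-acyclic) hypothesis and comparing it to $C^{\bullet}(|\I|; \R)$.

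First, I would exploit the natural transformation $\G \to \delta(\ast)$ of diagrams in $\Sp$, obtained from the canonical maps $G(i) \to \ast$, together with the naturality of the comparison map $c$, to produce the commutative square in $\ch(\R)$:
\[
\xymatrix{
\lim_{\I^{\op}} \delta(C^{\bullet}_b(\ast;\R)) \ar[r] \ar[d]_{\simeq} & \lim_{\I^{\op}} C^{\bullet}_b(G(-);\R) \ar[d] \\
\lim_{\I^{\op}} \delta(C^{\bullet}(\ast;\R)) \ar[r] & \lim_{\I^{\op}} C^{\bullet}(G(-);\R).
}
\]
The left vertical is an equivalence because $c_{\{\ast\}}$ is an isomorphism of chain complexes. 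The bottom-left term is canonically identified with $C^{\bullet}(|\I|;\R)$ via excision of $C^{\bullet}(-;\R)$ applied to $\colim_{\I} \delta(\ast) \simeq |\I|$, and the bottom-right with $C^{\bullet}(\colim_{\I} G;\R)$; under these identifications the bottom arrow is $C^{\bullet}(p_G;\R)$, where $p_G \colon \colim_{\I} G \to |\I|$ is the canonical map.

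For part (1), the acyclic hypothesis says each pointwise map $C^{\bullet}_b(\ast;\R) \to C^{\bullet}_b(G(i);\R)$ is a quasi-isomorphism, hence an equivalence in $\ch(\R)$. This pointwise equivalence induces an equivalence on limits, so the top horizontal map becomes an equivalence $C^{\bullet}(|\I|;\R) \simeq \lim_{\I^{\op}} C^{\bullet}_b(G(-);\R)$. Splicing this identification into the factorization of Proposition \ref{factorization_general} yields the desired diagonal $C^{\bullet}_b(X;\R) \to C^{\bullet}(|\I|;\R)$. To verify that the resulting composite equals $\rho_F$, I would use that $\F \to \G \to \delta(\ast)$ gives $p_F = p_G \circ (\colim_{\I} F \to \colim_{\I} G)$, so the composition $C^{\bullet}(|\I|;\R) \xrightarrow{C^{\bullet}(p_G;\R)} C^{\bullet}(\colim_{\I} G;\R) \to C^{\bullet}(\colim_{\I} F;\R) \simeq C^{\bullet}(X;\R)$ agrees with $\rho_F$, with the last equivalence coming from the $\HR$-colimit property of $\F$.

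For part (2), the $k$-acyclic hypothesis forces the cofiber of each $C^{\bullet}_b(\ast;\R) \to C^{\bullet}_b(G(i);\R)$ to have vanishing $H^n$ for all $n \leq k$ (the monomorphism condition at $k+1$ is precisely what is needed to kill $H^k$ of the cofiber, via the long exact sequence). Since the full subcategory of $\ch(\R)$ of objects with $H^n = 0$ for $n \leq k$ is closed under limits (a standard consequence of the $t$-structure on $\ch(\R)$, or equivalently a $\lim^*$-spectral sequence argument), the cofiber of the top horizontal map in the square above also has vanishing $H^n$ for $n \leq k$. The long exact sequence then forces this top map to be an isomorphism on $H^n$ for $n \leq k$, and inverting it on cohomology in that range and splicing with Proposition \ref{factorization_general} produces the required factorization of $c^n_X$ through $H^n(|\I|;\R)$ for each $n \leq k$; the identification with the induced $\rho^n_F$ proceeds exactly as in part (1) after applying $H^n$.

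The main obstacle I anticipate is the bookkeeping required to confirm that after all the splicings the composite really recovers $c_X$ (or $c^n_X$) and not merely a map homotopic to it. The necessary coherences all follow formally from the naturality of $c$ as a transformation of functors $\Sp^{\op} \to \ch(\R)$ and from the universal property of $\HR$-colimits, but the web of commutative squares must be assembled with some care. By contrast, the $t$-structure preservation needed in part (2) is entirely standard.
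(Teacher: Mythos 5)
Your proposal is correct and follows essentially the same route as the paper: both deduce the theorem from Proposition \ref{factorization_general} by using the natural transformation $\widetilde{G} \to \delta(\ast)$ and the equivalence $c_{\{\ast\}}$ to identify $\lim_{\I^{\op}} C^{\bullet}_b(G(-);\R)$ with $C^{\bullet}(|\I|;\R)$ (via excision of $C^{\bullet}(-;\R)$), and for (2) both rely on the fact that the property ``isomorphism in degrees $\leq k$ and monomorphism in degree $k+1$'' is closed under limits in $\ch(\R)$, which you justify slightly more explicitly through the coconnectivity of the cofibers. The identification of the resulting composite with $\rho_F$ via $p_F = p_G \circ (\colim_{\I} F \to \colim_{\I} G)$ is exactly the content of the two-row diagram in the paper's proof.
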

\begin{proof}
(1): Using the acyclicity of $\eta$ and the equivalence $c_{\{\ast\}} \colon C^{\bullet}_b(\ast; \R) \simeq C^{\bullet}(\ast;\R)$, we obtain the following canonical identifications in $\ch(\R)$
$$\lim_{\I^{\op}} C^{\bullet}_b(G(-);\R) \simeq \lim_{\I^{\op}} C^{\bullet}_b(\ast; \R) \simeq \lim_{\I^{\op}} C^{\bullet}(\ast; \R).$$
Moreover, since $C^{\bullet}(-; \R)$ is excisive, we also have canonical identifications:
$$\lim_{\I^{\op}} C^{\bullet}(\ast; \R) \simeq C^{\bullet}(\colim_{\I} \delta(\ast); \R) \simeq  C^{\bullet}(|\I|; \R).$$
Then the desired factorization follows from Proposition \ref{factorization_general}. Moreover, the identification of $\rho_F$ can be seen from the following diagram in $\ch(\R)$:
$$
\xymatrix{
\lim_{\I^{\op}} C^{\bullet}_b(G(i); \R) \ar[r]  & \lim_{\I^{\op}} C^{\bullet}(G(i); \R) \simeq C^{\bullet}(\colim_{\I} G; \R) \rightarrow  C^{\bullet}(\colim_{\I} F; \R)  \\
\lim_{\I^{\op}} C^{\bullet}_b(\ast; \R) \ar[r]^(0.3){\simeq} \ar[u]^{\simeq} & \lim_{\I^{\op}} C^{\bullet}(\ast ;\R) \simeq C^{\bullet}(|\I|; \R) \xrightarrow{\rho_F}   C^{\bullet}(X;\R). \ar@<3ex>[u]  \ar@<17ex>[u]  \ar@<-17ex>[u]_{\simeq}
}
$$
(Note that the fact that the bottom left map is an equivalence is actually not necessary for the proof.)

\smallskip

\noindent (2): Similarly to the proof of (1), we consider the canonical map in $\ch(\R)$
$$\lim_{\I^{\op}} C^{\bullet}_b(\ast; \R) \longrightarrow \lim_{\I^{\op}} C^{\bullet}_b(G(-); \R)$$
which is induced by the natural transformation $G \to \delta(\ast)$. Using that the factorization $\eta$ is $k$-acyclic, it follows that this map also induces an isomorphism in cohomology in degrees $\leq k$ (and a monomorphism in degree $k+1$), since this  property is closed under taking limits in $\ch(\R)$. Then the desired factorization follows from Proposition \ref{factorization_general} and the same diagram in $\ch(\R)$ as above (after passing to cohomology). 
\end{proof}

\begin{example}[Boundedly acyclic $\HR$-colimit diagrams] \label{bounded-acyclic-colimits}
Let $\widetilde{F} \colon \I \to \Sp_{/X}, \ i \mapsto (F(i) \to X),$ be an $\HR$-colimit diagram and let $\eta \colon \widetilde{F} = \widetilde{F}$ be the tautological factorization (Example \ref{ex3}). Suppose that the values of $F$ are \emph{boundedly acyclic}, that is, for each $i \in \I$, the map
$$H^n_b(\ast; \R) \xrightarrow{H^n_b(F(i) \to \ast; \R)} H^n_b(F(i); \R)$$
is an isomorphism for $n \geq 0$. In other words, $\eta$ is an acyclic factorization of $\widetilde{F}$, so we obtain by Proposition \ref{factorization_general} and Theorem \ref{nerve_theorem_general1} a canonical factorization of the comparison map $c_X$ in $\ch(\R)$
$$
\xymatrix{
C^{\bullet}_b(X; \R) \ar[r]^{c_X} \ar[d] & C^{\bullet}(X; \R). \\
\lim_{\I^{\op}} C^{\bullet}_b(F(-); \R) \ar[r]^(.6){\simeq} & C^{\bullet}(|\I|; \R) \ar[u]_{\rho_F} 
}
$$
\end{example}

\begin{remark} \label{acyclic-fact-gromov} Let $\widetilde{F} \colon \I \to \Sp_{/X}, \ i \mapsto (F(i) \to X),$ be an $\HR$-colimit diagram and let $\eta \colon \widetilde{F} \to \widetilde{G}$ be a factorization, where $\widetilde{G} \colon \I \to \Sp_{/X}, i \mapsto (G(i) \to X)$. As a consequence of Gromov's \emph{Mapping Theorem} \cite{Gr, Iv}, the factorization $\eta$ is acyclic if and only if the space $G(i)$ is connected and its fundamental group has trivial bounded 
cohomology for all $i \in \I$ (see also \cite{MR}). This shows that the factorizations of Example \ref{Moore-Postnikov} have a special importance for the applications of Theorem \ref{nerve_theorem_general1}. We will explore this further in Section \ref{amenable_covers}.
\end{remark}

On the other hand, the notion of a boundedly $k$-truncating factorization requires vanishing properties in the complementary range of degrees. 

\begin{definition} \label{def-truncating-fact}
Let $X \in \Sp$ be a space, let $\widetilde{F} \colon \I \to \Sp_{/X}$ be a diagram, and let $k \geq 0$ be an integer. 
A factorization $\eta \colon \widetilde{F} \to \widetilde{G}$ is a \emph{k-truncating factorization of $\widetilde{F}$} (\emph{with respect to $C^{\bullet}_b(-;\R)$}) if the diagram 
$$\widetilde{G} \colon \I \to \Sp_{/X}, \ i \mapsto (G(i) \to X),$$ satisfies the following property: $H^n_b(G(i); \R) \cong 0$ for every $n > k$ and each $i \in \I$.
\end{definition}

The application of the factorization in Proposition \ref{factorization_general} to boundedly truncating factorizations yields the following vanishing result for the comparison map. As we will see in Section \ref{amenable_covers}, this result is a refinement and generalization of the \emph{Vanishing theorem} (Theorem \ref{nerve_theorem}(2)). 

\begin{theorem} \label{nerve_theorem_general2}
Let $X \in \Sp$ be a space and let $\widetilde{F} \colon \I \to \Sp_{/X}, \ i \mapsto (F(i) \to X),$ be an $\HR$-colimit diagram. 
\begin{itemize}
\item[(1)] If $\eta \colon \widetilde{F} \to \widetilde{G}$ is a $0$-truncating factorization of $\widetilde{F}$ and $\I$ is (Joyal) equivalent to a simplicial set of dimension $< m$, then the comparison map $c_X$ vanishes in cohomology in degrees $\geq m$.
\item[(2)] More generally, if $\eta \colon \widetilde{F} \to \widetilde{G}$ is a $k$-truncating factorization of $\widetilde{F}$ and $\I$ is (Joyal) equivalent to a simplicial set of dimension $d$, then the comparison map $c_X$ vanishes in cohomology in degrees $ > d + k$.
\end{itemize}
\end{theorem}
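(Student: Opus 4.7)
The plan is to deduce Theorem \ref{nerve_theorem_general2} from the factorization of $c_X$ provided by Proposition \ref{factorization_general}, combined with a standard cohomological dimension bound for $\I^{\op}$-indexed limits in $\ch(\R)$. Applying Proposition \ref{factorization_general} to the $k$-truncating factorization $\eta \colon \widetilde{F} \to \widetilde{G}$ produces a factorization of $c_X$ through the object $L := \lim_{\I^{\op}} C^{\bullet}_b(G(-);\R)$. It therefore suffices to prove that $L$ has trivial cohomology in degrees $> d + k$, for this immediately forces $c_X$ to induce the zero map on $H^n$ in that range. Part (1) would then follow from part (2) by taking $k = 0$ and $d = m-1$, which converts the vanishing range $n > d+k$ into $n \geq m$.

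To bound the cohomology of $L$, I would observe that the $k$-truncating hypothesis places each value $C^{\bullet}_b(G(i);\R)$ of the diagram in the coconnective part $\ch(\R)^{\leq k}$ of the natural $t$-structure, and then invoke the standard fact that, for a small $\infty$-category $\I$ of (simplicial) dimension $\leq d$, the limit functor $\lim_{\I^{\op}}$ carries diagrams with values in $\ch(\R)^{\leq k}$ into $\ch(\R)^{\leq d+k}$. The cleanest justification is the conditionally convergent Bousfield--Kan style spectral sequence
$$E_2^{p,q} = \lim\nolimits^p_{\I^{\op}} H^q\bigl(C^{\bullet}_b(G(-);\R)\bigr) \Longrightarrow H^{p+q}(L).$$
The $k$-truncating assumption gives $E_2^{p,q} = 0$ for $q > k$, while the dimensional assumption on $\I$ gives $E_2^{p,q} = 0$ for $p > d$. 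For any $n > d + k$, every bidegree $(p,q)$ with $p + q = n$ satisfies either $p > d$ or $q > k$, so every relevant $E_2$-term vanishes and hence $H^n(L) = 0$.

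The main technical obstacle is to make sense of the dimension bound when $\I$ is only Joyal equivalent to a simplicial set of dimension $\leq d$, rather than literally being one. This is resolved by invariance of $\lim_{\I^{\op}}$ under Joyal equivalences of the indexing $\infty$-category: after replacing $\I$ with a Joyal-equivalent model of dimension $\leq d$, the spectral sequence argument applies directly. As an alternative that avoids spectral sequences altogether, one can argue by induction on skeleta of the chosen dimensionally bounded model of $\I$, using that the attachment of a simplex of dimension $p$ yields a pullback square in $\ch(\R)$ whose effect on coconnective bounds is a shift by at most $p$; iterating this through the $d$ skeletal layers again yields the bound $\ch(\R)^{\leq d+k}$ for $L$.
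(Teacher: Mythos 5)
Your proposal is correct and follows essentially the same route as the paper: factor $c_X$ through $\lim_{\I^{\op}} C^{\bullet}_b(G(-);\R)$ via Proposition \ref{factorization_general}, then bound the cohomological amplitude of this limit by $d+k$ using the dimension of $\I$ and the $k$-truncating hypothesis. The only cosmetic differences are that the paper justifies the amplitude bound by induction on skeleta via \cite[Proposition 4.4.2.2]{HTT} (which you offer as your alternative argument, in place of the Bousfield--Kan spectral sequence) and proves part (1) directly through $\pi_0(G(-))$ rather than deducing it from part (2).
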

\begin{proof} (1): Since $\eta$ is a $0$-truncating factorization, the map $G(i) \to \pi_0(G(i))$, $i \in \I$, induces an equivalence in $\ch(\R)$
$$C^{\bullet}_b(G(i); \R) \simeq C^{\bullet}_b(\pi_0(G(i)); \R).$$
Then, combining the factorization in Proposition \ref{factorization_general} with the diagram
$$
\xymatrix{ 
\lim_{\I^{\op}}C^{\bullet}_b(G(-); \R) \ar[r] & \lim_{\I^{\op}} C^{\bullet}(G(-); \R), \\ 
\lim_{\I^{\op}}C^{\bullet}_b(\pi_0(G(-)); \R) \ar[u]^{\simeq} \ar[r] & \lim_{\I^{\op}} C^{\bullet}(\pi_0(G(-)); \R) \ar[u] 
}
$$
we obtain a factorization of the comparison map $c_X$ through the limit of singular (bounded) cochain complexes 
$$D^{\bullet}:= \lim_{\I^{\op}} C^{\bullet}(\pi_0(G(-)); \R) \ \ (\text{resp. } \lim_{\I^{\op}} C^{\bullet}_b(\pi_0(G(-)); \R)).$$ 
Both of these are (homotopy) limits of ($\Z$-graded) chain complexes which are concentrated in degree $0$ (up to quasi-isomorphism). Suppose without loss of generality that $\I$ has dimension $< m$. Then the homology of the limit $D^{\bullet}$ is concentrated in nonpositive degrees $k$ with $|k| < m$; this follows by induction on the dimension of $\I$ using \cite[Proposition 4.4.2.2]{HTT} and standard facts from homological algebra. In other words, the cohomology of $D^{\bullet}$ vanishes in cohomological degrees $\geq m$, as required. 

\smallskip

\noindent (2): The proof is similar to (1).  By Proposition \ref{factorization_general}, the comparison map $c_X$ factors canonically through the limit 
$$\lim_{\I^{\op}} C^{\bullet}_b(G(-); \R).$$
By assumption, this is a limit of bounded cochain complexes, each of whose cohomology is concentrated in degrees $\leq k$ -- here we are using the cohomological grading convention. We may suppose without loss of generality that $\I$ has dimension $d$. Then, by induction on the dimension of $\I$, using \cite[Proposition 4.4.2.2]{HTT} and standard facts from homological algebra, it follows that the cohomology of the limit is concentrated in degrees $\leq d + k$. (Note that the proof shows the stronger statement that $c_X$ factors through a cochain complex concentrated in degrees $\leq d + k$.)
\end{proof}

\begin{remark} \label{acyclic-fact-gromov2} Let $\widetilde{F} \colon \I \to \Sp_{/X}, \ i \mapsto (F(i) \to X),$ be an $\HR$-colimit diagram and let $\eta \colon \widetilde{F} \to \widetilde{G}$ be a factorization, where $\widetilde{G} \colon \I \to \Sp_{/X}, i \mapsto (G(i) \to X)$. Similarly to Remark \ref{acyclic-fact-gromov}, if the factorization $\eta$ is $0$-truncating, then each component of $G(i)$ has trivial bounded cohomology, therefore the fundamental group $\pi_1(G(i), x)$ has trivial bounded cohomology for all $i \in \I$ and $x$ in $G(i)$. The converse also holds if each space $G(i)$, $i \in \I$, has finitely many components; more generally, the converse holds under the assumption that the components of $G(i)$, for each $i \in \I$, satisfy the uniform boundary condition uniformly (see \cite[Appendix A]{LLM}).  Again this demonstrates the importance of the factorizations of Example \ref{Moore-Postnikov} for the applications of Theorem \ref{nerve_theorem_general2}. We will explore this further in Section \ref{amenable_covers}.
\end{remark}

\subsection{Generalizations} \label{generalizations} Proposition \ref{factorization_general} and Theorems \ref{nerve_theorem_general1} and \ref{nerve_theorem_general2} apply in much greater generality with the same proofs.

Specifically, Proposition \ref{factorization_general} generalizes to natural transformations $c \colon B \to H$ where $B, H \colon \Sp^{\op} \to \D$ are functors to a complete $\infty$-category $\D$ and $H$ is excisive. In this context, $\HR$-colimit diagrams are replaced by diagrams 
$$\widetilde{F} \colon \I \to \Sp_{/X}, i \mapsto (F(i) \to X),$$ such that the canonical map $\colim_{\I} F \to X$ maps to an equivalence in $\D$ under $H$.

Theorem \ref{nerve_theorem_general1}(1) also holds in this generality (with the same proof). In this context, a factorization $\eta \colon \widetilde{F} \to \widetilde{G}$ of $\widetilde{F} \colon \I \to \Sp_{/X}$ is called \emph{acyclic} (with respect to $B$) if $$B(\ast) \xrightarrow{B(G(i) \to \ast)} B(G(i))$$
is an equivalence in $\D$ for every $i \in \I$. Moreover, Theorem \ref{nerve_theorem_general1}(2) and Theorem \ref{nerve_theorem_general2} also apply more generally, e.g., they hold for natural transformations $c \colon B \to H$ as above, where $\D = \ch(R)$ is the derived $\infty$-category of a ring $R$ and the homology of $B(\ast) \in \ch(R)$ is concentrated in degree $0$. In addition, a close inspection of these proofs indicates that it suffices to know only that $H$ acts excisively for the given diagram in the corresponding range of degrees. 

\smallskip 

In particular, Proposition \ref{factorization_general} and Theorems \ref{nerve_theorem_general1} and \ref{nerve_theorem_general2} generalize to analogous results about the comparison map for bounded cohomology with respect to general normed rings or with twisted coefficients (coming from $X$). 

\smallskip

By passing to the opposite $\infty$-categories, we also obtain versions of these results for natural transformations $c \colon H \to B$ where $B, H \colon \Sp \to \D$ are functors to a cocomplete $\infty$-category $\D$ and $H$ is excisive (i.e., $H$ preserves small colimits). In particular, this  yields analogous results to Proposition \ref{factorization_general} and Theorems \ref{nerve_theorem_general1} and \ref{nerve_theorem_general2} for the comparison map for $\ell^1$-homology. In this case, the comparison map corresponds to the assembly map for $\ell^1$-homology (in the sense of Section \ref{sec:prelim}); this should not be confused with the (other) excisive approximation to $\ell^1$-homology (in the sense of Goodwillie calculus) which is shown to be trivial \cite{Wi}. 

\section{Amenable factorizations} \label{amenable_covers}

Amenable covers (or amenable factorizations) are in practice the main source of acyclic or $0$-truncating factorizations (cf. Remarks \ref{acyclic-fact-gromov} and \ref{acyclic-fact-gromov2}). 

\begin{definition}
Let $X \in \Sp$ be a space and let $\widetilde{F} \colon \I \to \Sp_{/X}$ be a diagram. A factorization $\eta \colon \widetilde{F} \to \widetilde{G}$ is called \emph{amenable} if the diagram 
$$\widetilde{G} \colon \I \to \Sp_{/X}, \ i \mapsto (G(i) \to X),$$ satisfies the following property: the fundamental group $\pi_1(G(i), x)$ is amenable  for every $i \in \I$ and $x$ in $G(i)$.
\end{definition}

\begin{corollary}  \label{amenable-fact}
Let $X \in \Sp$ be a space and let $\widetilde{F} \colon \I \to \Sp_{/X}, \ i \mapsto (F(i) \to X),$ be an $\HR$-colimit diagram. Suppose that $\eta \colon \widetilde{F} \to \widetilde{G}$ is an amenable factorization of $\widetilde{F}$, where $\widetilde{G} \colon \I \to \Sp_{/X}, i \mapsto (G(i) \to X)$. Then the following hold: 
\begin{itemize}
\item[(1)]  If the space $G(i)$ is connected for all $i \in \I$, then there is a canonical factorization of $c_X$ in $\ch(\R)$:
$$
\xymatrix{
C^{\bullet}_b(X; \R) \ar[rr]^{c_X} \ar[rd] && C^{\bullet}(X; \R) \\
& C^{\bullet}(|\I|; \R) \ar[ru]_{\rho_F} &
}
$$
where $\rho_F \colon C^{\bullet}(|\I|; \R) \xrightarrow{C^{\bullet}(p_{F}; \R)} C^{\bullet}(\colim_{\I} F; \R) \simeq C^{\bullet}(X; \R).$ 
\item[(2)] If $\I$ is (Joyal) equivalent to a simplicial set of dimension $< m$, then the comparison map $c_X$ vanishes in cohomology in degrees $\geq m$.
\end{itemize}
\end{corollary}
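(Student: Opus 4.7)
The plan is to derive both parts by verifying that the amenability hypothesis on $\eta$ supplies the acyclicity and truncation data required to invoke Theorems \ref{nerve_theorem_general1} and \ref{nerve_theorem_general2}, respectively. The only analytic input will be Gromov's Mapping Theorem (as recalled in Remarks \ref{acyclic-fact-gromov} and \ref{acyclic-fact-gromov2}): if $Y$ is a path-connected space whose fundamental group is amenable, then the unique map $Y \to \pt$ induces an isomorphism $H^n_b(\pt;\R) \xrightarrow{\cong} H^n_b(Y;\R)$ for every $n \geq 0$.

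For part (1), I take the additional hypothesis that each $G(i)$ is path-connected together with the amenability of $\pi_1(G(i), x)$ and apply Gromov's Mapping Theorem to each $G(i)$ directly. This shows that the canonical map $H^n_b(\pt;\R) \to H^n_b(G(i);\R)$ is an isomorphism for all $n \geq 0$ and all $i \in \I$, which is precisely the statement that $\eta \colon \widetilde{F} \to \widetilde{G}$ is an acyclic factorization in the sense of Definition \ref{def-acyclic-fact}(1). Theorem \ref{nerve_theorem_general1}(1) then immediately produces the canonical factorization of $c_X$ through $C^{\bullet}(|\I|;\R)$, with $\rho_F$ identified as claimed.

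For part (2), I drop the connectedness assumption and decompose each $G(i)$ into its path components, writing $G(i) = \coprod_{\alpha} G(i)_{\alpha}$. Each component has amenable fundamental group by hypothesis, so Gromov's Mapping Theorem applied to $G(i)_{\alpha}$ gives $H^n_b(G(i)_{\alpha};\R) = 0$ for all $n \geq 1$. Since the bounded cochain complex sends coproducts to products (this is immediate from the pointwise boundedness condition in the definition of bounded cochains), I deduce $H^n_b(G(i);\R) = 0$ for every $n \geq 1$ and every $i \in \I$. Hence $\eta$ is a $0$-truncating factorization of $\widetilde{F}$ in the sense of Definition \ref{def-truncating-fact}, and Theorem \ref{nerve_theorem_general2}(1) supplies the vanishing of $c_X$ in cohomological degrees $\geq m$ under the dimension hypothesis on $\I$.

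The only substantive point in the argument is the appeal to Gromov's Mapping Theorem to turn amenability of fundamental groups into the vanishing of higher bounded cohomology; everything else is a formal application of the machinery of Section \ref{sec_factorizations}. The main obstacle, therefore, is not in the deduction itself but in the underlying geometric fact, which is already a black box at this stage of the paper.
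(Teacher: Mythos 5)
Your deduction of part (1) is correct and coincides with the paper's: connectedness of $G(i)$ together with amenability of $\pi_1(G(i),x)$ gives, via the Mapping Theorem, that $\eta$ is an acyclic factorization in the sense of Definition \ref{def-acyclic-fact}(1), and Theorem \ref{nerve_theorem_general1}(1) then applies verbatim.

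Part (2), however, has a genuine gap at the step where you pass from the path components of $G(i)$ to $G(i)$ itself. The bounded cochain complex does \emph{not} send coproducts to products: a singular cochain on $\coprod_{\alpha} G(i)_{\alpha}$ is bounded precisely when the family of its restrictions to the components is \emph{uniformly} bounded, so $C^{\bullet}_b\bigl(\coprod_{\alpha} G(i)_{\alpha};\R\bigr)$ is the bounded ($\ell^{\infty}$-) product of the complexes $C^{\bullet}_b(G(i)_{\alpha};\R)$, not their full product. Consequently, knowing that $H^n_b(G(i)_{\alpha};\R)=0$ for all $\alpha$ and all $n\geq 1$ does not formally yield $H^n_b(G(i);\R)=0$: to bound a cocycle on the disjoint union one must choose primitives on the components whose norms are uniformly bounded over $\alpha$, which is a uniform boundary condition holding \emph{uniformly} across the family of components. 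This is exactly the point on which the paper's proof concentrates: a family of amenable groups satisfies the uniform boundary condition uniformly (\cite[Example A.10]{LLM}); by the proof of the Mapping Theorem combined with \cite[Proposition A.3]{LLM} the same holds for the family of components of $G(i)$; and then \cite[Theorem A.15]{LLM} identifies $H^{\bullet}_b(G(i);\R)$ with the bounded product of the $H^{\bullet}_b(G(i)_{\alpha};\R)$, which is concentrated in degree $0$. Your argument as written is complete only when each $G(i)$ has finitely many components (cf.\ Remark \ref{acyclic-fact-gromov2}); in general the uniformity input is essential and must be supplied before Theorem \ref{nerve_theorem_general2}(1) can be invoked.
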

\begin{proof} We claim that the amenable factorization $\eta$ is $0$-truncating.  It follows from the \emph{Mapping Theorem} \cite{Gr, Iv} that the components of $G(i)$ (or the associated fundamental groups) have trivial bounded cohomology for all $i \in \I$. As $G(i)$ may have infinitely many components, in order to show that the bounded cohomology of $G(i)$ itself is concentrated in degree $0$, it is necessary to look at the general properties of the cohomology of bounded products (see \cite[Appendix A]{LLM}). First, there is a uniform bound for the uniform boundary condition of a -- possibly infinite -- family of amenable groups \cite[Example A.10]{LLM}. By the proof of the \emph{Mapping Theorem} (combined with \cite[Proposition A.3]{LLM}), the same property holds also for families of connected spaces with amenable fundamental groups. It follows that the bounded cohomology of $G(i)$, $i \in \I$, is the bounded product of the bounded cohomologies of its components (see \cite[Theorem A.15]{LLM}), therefore $H^{\bullet}_b(G(i); \R)$, $i \in \I$, is concentrated in degree $0$. This proves the claim that amenable factorizations are $0$-truncating. Then (2) is a special case of Theorem \ref{nerve_theorem_general2}(1). Under the additional assumption that $G(i)$ is connected for all $i \in \I$, the $0$-truncating factorization $\eta$ is actually acyclic. Therefore (1) follows from Theorem \ref{nerve_theorem_general1}(1).
\end{proof}

Amenable factorizations often arise in practice from amenable covers of topological spaces (cf. Example \ref{covering-colim}). Let $\U = \{U_i \}_{i \in I}$ be an open cover of a topological space $X$. We recall from Example \ref{covering-colim} that there is an associated colimit diagram 
$$\widetilde{F}_{\U} \colon \I_{\U} \to \Sp_{/X}, \ \sigma \mapsto (U_{\sigma} \xrightarrow{i_{\sigma}} X).$$
As explained in Example \ref{Moore-Postnikov}, the Moore--Postnikov truncation (at $\pi_1$) of the maps $(i_{\sigma} \colon U_{\sigma} \rightarrow X)$ defines a new diagram 
$$\mathcal{T}(\widetilde{F}_{\U}) \colon \I_{\U} \to \Sp_{/X}, \ \sigma \mapsto (V_{\sigma} := \mathcal{T}(i_{\sigma}) \xrightarrow{j_{\sigma}} X)$$
together with a factorization $p \colon \widetilde{F}_{\U} \to \ImFU$.
We recover the statements in Theorem \ref{nerve_theorem} as special cases:

\smallskip

\noindent (1) \emph{Covering theorem}. Suppose that $p \colon \widetilde{F}_{\U} \to \mathcal{T}(\widetilde{F}_{\U})$ is an acyclic factorization of $\widetilde{F}_{\U}$, i.e., the space $V_{\sigma}$ have trivial bounded cohomology for every $\sigma \in \I_{\U}$. Then there is a canonical factorization in $\ch(\R)$ (Theorem \ref{nerve_theorem_general1}):  
$$
\xymatrix{
C^{\bullet}_b(X; \R) \ar[rr]^{c_X} \ar[rd] && C^{\bullet}(X; \R). \\
& C^{\bullet}(|\I_{\U}|; \R) \ar[ur]_{\rho_{F_{\U}}} & 
}
$$

\smallskip

\noindent The factorization $p$ is acyclic in the case where $U_{\sigma} \subseteq X$ is path-connected for every $\sigma \in \I_{\U}$ and the image of the homomorphism $(\pi_1(U_i, x) \to \pi_1(X, x))$ is amenable for every $i \in I$. In this case, for every $\sigma \in \I_{\U}$, the space $V_{\sigma}$ is connected in $\Sp$ with amenable fundamental group -- this uses the fact that subgroups of amenable groups are again amenable. In other words, the factorization $p \colon \widetilde{F}_{\U} \to \mathcal{T}(\widetilde{F}_{\U})$ is amenable and satisfies the assumptions of Corollary \ref{amenable-fact}(1). This case recovers Theorem \ref{nerve_theorem}(1).

\smallskip

\noindent We also refer to the recent work of Ivanov \cite{Iv2} for closely related results.  

\smallskip

\noindent (2) \emph{Vanishing theorem}. Suppose that $p \colon \widetilde{F}_{\U} \to \mathcal{T}(\widetilde{F}_{\U})$ is  a $0$-truncating factorization of $\widetilde{F}_{\U}$. If the dimension of $\I_{\U}$ is $< m$ (i.e., if $U_{\sigma} = \varnothing$ when $\sigma$ has $m+1$ elements), then the comparison map
$$c^n_X \colon H^n_b(X; \R) \to H^n(X; \R)$$
is zero for $n \geq m$ (Theorem \ref{nerve_theorem_general2}). 

\smallskip

\noindent The factorization $p \colon \widetilde{F}_{\U} \to \mathcal{T}(\widetilde{F}_{\U})$ is $0$-truncating when $p$ is  an amenable factorization, that is, when the image of the homomorphism $(\pi_1(U_i, x) \to \pi_1(X, x))$ is amenable for every $x \in U_i$ and $i \in I$ (see the proof of Corollary \ref{amenable-fact}). We recall that the open cover $\U$ is called \emph{amenable} in this case -- note that $U_i$ are not required to be path-connected. This case recovers Theorem \ref{nerve_theorem}(2).

\smallskip

More generally, 

\smallskip

\noindent (3) Suppose that $p \colon \widetilde{F}_{\U} \to \mathcal{T}(\widetilde{F}_{\U})$ is  a $k$-truncating factorization of $\widetilde{F}_{\U}$. In this case, we call the open cover $\U$ $k$-\emph{truncating}. If $\I_{\U}$ has dimension $d$, then the comparison map 
$$c_X^n \colon H^n_b(X; \R) \to H^n(X; \R)$$
is zero for $n > d + k$ (Theorem \ref{nerve_theorem_general2}).  On the other hand, if $p \colon \widetilde{F}_{\U} \to \mathcal{T}(\widetilde{F}_{\U})$ is a $k$-acyclic factorization, then the comparison map $c_X^n$ factors through the map $\rho_{F_{\U}}^n \colon H^n(|\I_{\U}|; \R) \to H^n(X; \R)$ for $n \leq k$ (Theorem \ref{nerve_theorem_general1}).

\medskip

We note that these results (esp. the vanishing result in (2)) are useful for showing the vanishing of the simplicial volume of $X$, when $X$ is an oriented closed connected manifold; for example, see \cite[3.1]{Gr}.

\begin{application}[Families of amenable covers] This example of application is a slight variation of a recent result of L\"oh--Moraschini \cite[Corollary 1.2]{LM}. Let $p \colon E \to B$ be a fiber bundle between path-connected spaces and let $\U = \{U_i\}_{i \in I}$ be an open cover of $B$ by trivializing neighborhoods for $p$. In addition, suppose that the path-components of $U_{\sigma}$ have amenable fundamental groups for every $\sigma \in \I_{\U}$. Let $d$ denote the dimension of $\I_{\U}$. If $B$ is a finite simplicial complex (or a smooth closed manifold), then there is such an open cover of $B$ with $d = \mathrm{dim}(B)$. 

Let $F$ denote the fiber of $p$ (at a basepoint $b \in B$) and suppose that $\V = \{V_j\}_{j \in J}$ is an amenable (or $k$-truncating) open cover of $F$. We denote by $\kappa$ the dimension of $\I_{\V}$. Let $\U' = \{U'_i = p^{-1}(U_i)\}_{i \in I}$ denote the induced open cover of $E$. Using an identification $U'_i \cong U_i \times F$, the amenable open cover $\{U_i \times V_j\}_{j \in J}$ of $U_i \times F$ determines an amenable open cover of $U'_i$, denoted by $\V_i = \{V_{ij}\}_{j \in J}$  -- this is amenable because amenable groups are closed under products. 

These choices of open covers for $\{U'_i\}_{i \in I}$ determine an amenable open cover $\U \times \V:= \bigcup_{i \in I} \V_i$ of $E$ -- this is amenable because amenable groups are closed under quotients. An elementary combinatorial argument shows that the dimension of $\I_{\U \times \V}$ is less than $(d+1)(\kappa+1)$.
Therefore the comparison map 
$$c^n_E: H^n_b(E; \R) \to H^n(E;\R)$$ 
vanishes for $n \geq (d+1)(\kappa + 1)$. Assuming $E$ is an oriented closed connected manifold, this vanishing result implies the vanishing of the simplicial volume of $E$ when $\mathrm{dim}(E) \geq (d+1)(\kappa + 1)$.  
\end{application}

\begin{example}[The plus construction] Let $X$ be a based connected space (for example, $X = BG$ for a discrete group $G$) and 
let $X^+_P$ denote the plus construction of $X$ associated to a normal perfect subgroup $P$ of $\pi_1(X, *)$ \cite{HH, Ra}. We recall that every connected space is equivalent to $BG^+_P$ for some discrete group $G$ and a normal perfect subgroup $P \unlhd G$ \cite{KT}. 

Then every colimit diagram $\widetilde{F} \colon \I^{\triangleright} \to \Sp_{/X}$ gives rise to an $\HR$-colimit diagram 
$\iota_*\widetilde{F} \colon \I^{\triangleright} \to \Sp_{/X^+_P}$ by composition with the canonical acyclic map 
$\iota \colon X \to X^+_P$. Moreover, a ($k$-acyclic/truncating or amenable) factorization of $\widetilde{F}$ defines also a ($k$-acyclic/truncating or amenable) factorization of $\iota_*\widetilde{F}$. 

Conversely, given a factorization $\eta \colon \iota_*\widetilde{F} \to \widetilde{G}$ of $\iota_*\widetilde{F}$, there is an associated factorization of $\widetilde{F},$
$$\iota^*\eta \colon \widetilde{F} \to \iota^* \widetilde{G},$$ 
where $\iota^*\widetilde{G} \colon \I \to \Sp_{/X}, \ i \mapsto (G(i) \times_{X^+_P} X \to X),$ is defined by pulling back $\widetilde{G}$ along the map $\iota$; in addition, there are natural acyclic maps $G(i) \times_{X^+_P} X \to G(i)$ for every $i \in \I$. Thus, applying Proposition \ref{factorization_general}, we obtain a factorization of $c_X$ in $\ch(\R)$:
$$
\xymatrix{
C^{\bullet}_b(X; \R) \ar[r]^{c_X} \ar[d] & C^{\bullet}(X; \R).  \\
\lim_{\I^{\op}}C^{\bullet}_b(G(-) \times_{X^+_P} X; \R) \ar[r] & \lim_{\I^{\op}} C^{\bullet}(G(-) \times_{X^+_P} X; \R) \ar[u]  \simeq \lim_{\I^{\op}} C^{\bullet}(G(-); \R) 
}
$$

In this way, we may obtain factorizations of the comparison map $c_X$ of $X$ using factorizations of the comparison map $c_{X^+_P}$ of $X^+_P$ whose fundamental group may be simpler in general. 
\end{example}

\section{Parametrized factorizations} \label{pullback_fact}

\subsection{General parametrized factorizations} \label{general_parametrized} For any map $p \colon X \to B$ in $\Sp$, we may carry out a similar analysis of the properties of the comparison map $c_X \colon C^{\bullet}_{b}(X;\R) \to C^{\bullet}(X;\R)$ in a parametrized setting. In this section we briefly sketch some of the details and discuss some specific examples and applications. 

\smallskip

Let $\Sp^{\to} := \mathrm{Fun}(\Delta^1, \Sp)$ denote the $\infty$-category of maps in $\Sp$. Note that a diagram $$\widetilde{F} = (\widetilde{F}_0 \to \widetilde{F}_1) \colon \I \to (\Sp^{\to})_{/p}$$ corresponds to a cone $\widetilde{F} \colon \I^{\triangleright} \to \Sp^{\to}$ on $F:=\widetilde{F}_{| \I} \colon \I \to \Sp^{\to}$, written $i \mapsto F(i) = (F_0(i) \to F_1(i))$, with cone object $p \in \Sp^{\to}$. Such a diagram in $(\Sp^{\to})_{/p}$ is a \emph{colimit diagram} (resp. \emph{$\HR$-colimit diagram}) if the associated cone is a (pointwise) colimit diagram (resp. $\HR$-colimit diagram) in $\Sp^{\to}$. 

A diagram $\widetilde{G} = (\widetilde{G}_0 \to \widetilde{G}_1) \colon \I \to (\Sp^{\to})_{/p}$, $i \mapsto \big(G(i) = (G_0(i) \to G_1(i)) \to p\big),$ together with a natural transformation $$\eta \colon \widetilde{F} \to\widetilde{G}$$ is called a \emph{factorization} of $\widetilde{F} \colon \I \to (\Sp^{\to})_{/p}$. Explicitly, a factorization $\eta \colon \widetilde{F} \to \widetilde{G}$ of $\widetilde{F}$ consists of a factorization $\eta_0 \colon \widetilde{F}_0 \to \widetilde{G}_0$ of $\widetilde{F}_0 \colon \I \to \Sp_{/X}$ and a factorization $\eta_1 \colon \widetilde{F}_1 \to \widetilde{G}_1$ of $\widetilde{F}_1 \colon \I \to \Sp_{/B}$ which fit into a natural diagram for all $i \in \I$:
$$
\xymatrix{
F_0(i) \ar[d] \ar[r]^{\eta_{0, i}} & G_0(i) \ar[d] \ar[r] & X \ar[d]^p \\
F_1(i) \ar[r]^{\eta_{1,i}} & G_1(i) \ar[r] & B.
}
$$
 
\begin{example}[Pullback factorizations] Let $\widetilde{F}_1 \colon \I \to \Sp_{/B}, \ i \mapsto (F_1(i) \to B),$ be a (colimit) diagram and $\eta_1 \colon \widetilde{F}_1 \to \widetilde{G}_1$ a factorization of $\widetilde{F}_1$, where $\widetilde{G}_1 \colon \I \to \Sp_{/B}, \ i \mapsto (G_1(i) \to B)$. These diagrams induce by pullback along $p$ a (colimit) diagram (because pulling back commutes with colimits in $\Sp$)
$$\widetilde{F}_0 \colon \I \to \Sp_{/X}, \ i \mapsto (F_0(i) = F_1(i) \times_B X \to X),$$
and a factorization $\eta_0 \colon \widetilde{F}_0 \to \widetilde{G}_0$, where  
$$\widetilde{G}_0 \colon \I \to \Sp_{/X}, \ i \mapsto (G_0(i) = G_1(i) \times_B X \to X).$$
Moreover, the canonical maps $F_0(i) \to F_1(i)$ and $G_0(i) \to G_1(i)$ determine diagrams $\widetilde{F} = (\widetilde{F}_0 \to \widetilde{F}_1) \colon \I \to (\Sp^{\to})_{/p}$ and $\widetilde{G} = (\widetilde{G}_0 \to \widetilde{G}_1) \colon \I \to (\Sp^{\to})_{/p}$ together with a factorization $\eta = (\eta_0 \to \eta_1)\colon \widetilde{F} \to \widetilde{G}$. 
\end{example}

The use of (parametrized) factorizations for the study of the comparison map is based on the diagram below (cf. Proposition \ref{factorization_general}). Let $\widetilde{F} = (\widetilde{F}_0 \to \widetilde{F}_1) \colon \I \to (\Sp^{\to})_{/p}$ be a diagram such that $\widetilde{F}_0 \colon \I \to \Sp_{/X}$ is an $\HR$-colimit diagram. Given another diagram $\widetilde{G} = (\widetilde{G}_0 \to \widetilde{G}_1) \colon \I \to (\Sp^{\to})_{/p}$ and a factorization $\eta \colon \widetilde{F} \to \widetilde{G}$ of $\widetilde{F}$, then we obtain the following diagram in $\D(\R)$:
\begin{equation*} \label{fact_param} 
\xymatrix{
C^{\bullet}_b(X; \R) \ar[r]^{c_X} \ar[d] & C^{\bullet}(X; \R). \ar[d] \ar[dr]^{\simeq} & \\
\lim_{\I^{\op}} C^{\bullet}_b(G_0(-); \R) \ar[r]^{[**]} & \lim_{\I^{\op}} C^{\bullet}(G_0(-); \R)  \ar[r] & \lim_{\I^{\op}} C^{\bullet}(F_0(-); \R) \\
\lim_{\I^{\op}} C^{\bullet}_b(G_1(-); \R) \ar[u]^{[*]} \ar[r] & \lim_{\I^{\op}} C^{\bullet}(G_1(-); \R) \ar[u]  \ar[r] & \lim_{\I^{\op}} C^{\bullet}(F_1(-); \R) \ar[u] \\
}
\end{equation*}
Moreover, if $\widetilde{F}_1 \colon \I \to \Sp_{/B}$ is also an $\HR$-colimit diagram, then we have a canonical equivalence $\lim_{\I^{\op}} C^{\bullet}(F_1(-); \R) \simeq C^{\bullet}(B; \R)$ such that the right vertical composition in the diagram is identified with the map $C^{\bullet}(p;\R) \colon C^{\bullet}(B;\R) \to C^{\bullet}(X;\R)$.

\smallskip

Special properties of $\eta$ or $p$ would generally allow us to conclude useful properties of the maps indicated in the diagram by [$\ast$] and [$\ast\ast$], and therefore obtain information about the comparison map $c_X$. In analogy with Theorems \ref{nerve_theorem_general1} and \ref{nerve_theorem_general2}, the goal of this analysis is either to relate the comparison map $c_X$ with the bounded cohomology of $B$ or show its vanishing in a range of degrees.  In order to view Theorems \ref{nerve_theorem_general1} and \ref{nerve_theorem_general2} as special cases of the present context applied to the map $p \colon X \to \ast$, it suffices to note that $\Sp_{/X}$ can be identified with the full subcategory of $(\Sp^{\to})_{/p}$, which is spanned by objects of the form
$$
\xymatrix{
A \ar[r] \ar[d] & X \ar[d] \\
\ast \ar@{=}[r] & \ast.}
$$ 
In this way we may view diagrams and their factorizations in $\Sp_{/X}$ as diagrams and factorizations in $(\Sp^{\to})_{/p}$. (The present parametrized setting can also be extended to objects $p \colon K \to \Sp$ and diagrams and their factorizations in $\mathrm{Fun}(K, \Sp)_{/p}$ for a based simplicial set $K$.)

\begin{example}\label{pullback_fact_1}
In the notation introduced above, suppose that the map $G(i)$ induces isomorphisms in bounded cohomology for all $i \in \I$, that is, the induced map $H^n_b(G_1(i);\R) \to H^n_b(G_0(i);\R)$ is an isomorphism for every $n \geq 0$.  In this case, the map [$\ast$] in the diagram is an equivalence in $\D(\R)$, so we obtain a canonical factorization:
\begin{equation} \label{fact_5.2} \tag{\ref{pullback_fact_1}}
\xymatrix{
C^{\bullet}_b(X; \R) \ar[rr]^{c_X} \ar[dr] && C^{\bullet}(X; \R). \\
&  \lim_{\I^{\op}} C^{\bullet}(F_1(-); \R) \ar[ur] &
}
\end{equation}
If $\widetilde{F}_1 \colon \I \to \Sp_{/B}$ is an $\HR$-colimit diagram, then this factorization \eqref{fact_5.2} is identified with the factorization:
\begin{equation*}
\xymatrix{
C^{\bullet}_b(X; \R) \ar[rr]^{c_X} \ar[dr] && C^{\bullet}(X; \R). \\
&  C^{\bullet}(B; \R) \ar[ru]_{C^{\bullet}(p; \R)} \ar[ur] &
}
\end{equation*}
On the other hand, if $B=\ast$ and $\widetilde{F}_1 \colon \I \to \Sp$ is the constant diagram $\delta(\ast)$, then the factorization \eqref{fact_5.2} is identified with the factorization of Theorem \ref{nerve_theorem_general1}(1). So we may view the factorization \eqref{fact_5.2} as a relative version of Theorem \ref{nerve_theorem_general1}(1). 

Under appropriate assumptions on the maps $G(i)$, $i \in \I$, there are similar versions of Theorem \ref{nerve_theorem_general1}(2) and Theorem \ref{nerve_theorem_general2} in this context, too. We leave the details to the interested reader. 
\end{example}

\subsection{Pullbacks of open covers} \label{pullback_covers} We briefly discuss the special case of (possibly only $1$-categorical) pullback factorizations arising from open covers (Example \ref{covering-colim}). 

\smallskip

Let $p \colon E \to B$ be a surjective map of topological spaces and let $\U = \{U_i\}_{i \in I}$ be an open cover of $B$.  Let $\U' = \{U'_i = p^{-1}(U_i)\}_{i \in I}$ denote the induced open cover of $E$ -- note that $\I_{\U'} = \I_{\U}$. An important example is when $p: E \to B$ is a fiber bundle between oriented closed connected manifolds and $\U$ is an open cover of $B$ by trivializing neighborhoods for $p$. In this case, the factorizations of this subsection are often useful for showing the vanishing of the simplicial volume of $E$.

The open covers $\U$ and $\U'$ yield a diagram $\widetilde{F}_{\U/p} \colon \I_{\U} \to (\Sp^{\to})_{/p}$ which is defined for each $\sigma \in \I_{\U}$ by the square of topological spaces
$$
\xymatrix{
U'_{\sigma} = p^{-1}(U_{\sigma}) \ar[d]_{p_{\sigma}} \ar[r] & E \ar[d]^p \\
U_{\sigma} \ar[r] & B.
}
$$

\smallskip

\noindent \emph{(a) $\U$-local bounded cohomology equivalence} (cf. Example \ref{pullback_fact_1}). Suppose that the following property is satisfied: the map induced by $p_{\sigma} = p_{| U'_{\sigma}} \colon U'_{\sigma} \to U_{\sigma}$,
$$C^{\bullet}_b(U_{\sigma}; \R) \to C^{\bullet}_b(U'_{\sigma}; \R),$$
is an equivalence in $\ch(\R)$ for every $\sigma \in \I_{\U}$. This happens, for example, when $U_{\sigma}$ is weakly contractible and $U'_{\sigma}$ is path-connected with amenable (or boundedly acyclic) fundamental group for every $\sigma \in \I_{\U}$. 

Then, using the tautological factorization (Example \ref{ex3}) of $\widetilde{F}_{\U/p}$ and the equivalence $\lim_{\sigma \in\I^{\op}_{\U}} C^{\bullet}(U_{\sigma}; \R) \simeq C^{\bullet}(B; \R)$, we obtain a canonical factorization in $\ch(\R)$:
\begin{equation*}
\xymatrix{
C^{\bullet}_b(E; \R) \ar[rr]^{c_E} \ar[dr] && C^{\bullet}(E; \R). \\
& C^{\bullet}(B; \R) \ar[ur]_{C^{\bullet}(p; \R)} 
}
\end{equation*}

\medskip

\noindent \emph{(b) $\U$-local boundedly equivalent factorizations} (cf. Example \ref{pullback_fact_1}). 
Let $V_{\sigma}$ (resp. $V'_{\sigma}$) be the Moore--Postnikov truncation of the inclusion map $U_{\sigma} \to B$ (resp. $U'_{\sigma} \to E$) as in Example \ref{Moore-Postnikov} and Section \ref{amenable_covers}. There are natural maps $p'_{\sigma} \colon V'_{\sigma} \to V_{\sigma}$ by the functoriality of the Moore--Postnikov truncation, yielding a parametrized factorization $\eta \colon \widetilde{F}_{\U/p} \to \widetilde{G}$; explicitly, this is given for each $\sigma \in \I_{\U}$ by the diagram
$$
\xymatrix{
U'_{\sigma} \ar[r] \ar[d]_{p_{\sigma}} & V'_{\sigma} \ar[d]_{p'_{\sigma}} \ar[r] & E \ar[d]^p \\
U_{\sigma} \ar[r] & V_{\sigma} \ar[r] & B.
}
$$
Suppose that the following property is satisfied: the map induced by $p'_{\sigma} \colon V'_{\sigma} \to V_{\sigma},$ 
$$C^{\bullet}_b(V_{\sigma}; \R) \to C^{\bullet}_b(V'_{\sigma}; \R),$$
is an equivalence in $\ch(\R)$ for every $\sigma \in \I_{\U}$. For example, this property is satisfied when the spaces $U_{\sigma}, U'_{\sigma}$ are path-connected for every $\sigma \in \I_{\U}$ and $\pi_1(p'_{\sigma})$ is surjective with boundedly acyclic kernel (see \cite{MR}). Then, proceeding as in Example \ref{pullback_fact_1}, we obtain a canonical factorization in $\ch(\R)$:
\begin{equation*} 
\xymatrix{
C^{\bullet}_b(E; \R) \ar[rr]^{c_E} \ar[dr] && C^{\bullet}(E; \R). \\
& C^{\bullet}(B; \R) \ar[ur]_{C^{\bullet}(p; \R)} & 
}
\end{equation*}

\subsection{Relative covering/vanishing theorems} \label{subsec:rel-cov-van} For an inclusion of topological spaces $i \colon A \subseteq X$, the \emph{relative bounded cohomology $C^{\bullet}_b(X, A; \R) \in \ch(\R)$} of the pair $(X,A)$ is defined by 
$$C^{\bullet}_b(X, A;\R) := \mathrm{fib}\big(C^{\bullet}_b(X;\R) \xrightarrow{C^{\bullet}_b(i; \R)} C^{\bullet}_b(A;\R)\big).$$
Moreover, we may define similarly the relative bounded cohomology of an arbitrary map in $\Sp$. Similarly to the absolute case, there is also an induced natural comparison map in $\ch(\R)$ in the relative setting 
$$c_{X,A} \colon C^{\bullet}_b(X,A;\R) \to C^{\bullet}(X,A;\R).$$ 
The analysis of parametrized factorizations in Subsections \ref{general_parametrized} and \ref{pullback_covers} can be adjusted to the relative setting simply by passing to the respective fibers -- this is another instance where working directly in $\D(\R)$ can be useful. 

More precisely, given an $\HR$-colimit cone $\widetilde{F} \colon \I^{\triangleright} \to (\Sp^{\to})_{/i}$ together with a factorization $\eta \colon \widetilde{F} \to \widetilde{G}$ (as in Subsection \ref{general_parametrized}), we obtain a factorization of $c_{X,A}$ in $\ch(\R)$:
$$\small{
\xymatrix{
C^{\bullet}_b(X, A; \R) \ar[r]^{c_{X,A}} \ar[d] & C^{\bullet}(X, A; \R). \ar[d]  \ar@/^6pc/[dd]^{\simeq} \\
\lim_{\I^{\op}} C^{\bullet}_b(G_1(-), G_0(-); \R) \ar[r] & \lim_{\I^{\op}} C^{\bullet}(G_1(-), G_0(-); \R)  \ar[d] \\
&  \lim_{\I^{\op}} C^{\bullet}(F_1(-), F_0(-); \R) 
}
}
$$
Similarly to the absolute case, we may identify conditions on the factorization $\eta$ such that the relative bounded cohomology groups $H^{\bullet}_b(G_1(i), G_0(i);\R)$ vanish in certain degrees, and thus obtain a description of the limit in $\ch(\R)$
$$\lim_{\I^{\op}} C^{\bullet}_b(G_1(-), G_0(-);\R)$$ 
(as in Theorem \ref{nerve_theorem_general1} and Example \ref{pullback_fact_1}) or conclude the vanishing of $c_{X,A}$ in certain degrees (as in Theorem \ref{nerve_theorem_general2}). 

The possible examples are numerous; we will illustrate the application of this general method with the following example, which shows versions of the covering and vanishing theorems in the relative context. Similar results (with different sets of assumptions) were also obtained recently by Li--L\"oh--Moraschini \cite{LLM}.  

\begin{application}[Relative covering and vanishing theorems via open covers] \label{relative-open-covers}
Let $\U = \{U_i\}_{i \in I}$ be an amenable (or $0$-truncating) open cover of $X$ and let $\U_A = \{U_i \cap A\}_{i \in I}$ denote the induced open cover of $A$. Together they determine a colimit cone $\widetilde{F}_{\U, \U_A} \colon \I_{\U}^{\triangleright} \to (\Sp^{\to})_{/i}$, which sends $\sigma \in \I_{\U}$ to the square
$$
\xymatrix{
U_{\sigma} \cap A \ar[r] \ar[d] & A \ar[d]^i \\
U_{\sigma} \ar[r] & X.
}
$$
Note that $U_{\sigma} \cap A$ may be empty for $\sigma \in \I_{\U}$, so $\I_{\U_A}$ may be strictly smaller than $ \I_{\U}$; $\I_{\U_A}$ is upward closed in $\I_{\U}$ (viewed as posets). Let $d$ denote the dimension of (the nerve of) $\I_{\U}$ and $d_A$ the dimension of $\I_{\U_A}$. 

Assume that the open cover $\U_A$ of $A$ is again amenable (or $0$-truncating). For example, this holds if the inclusion $i \colon A \subseteq X$ is $\pi_1$-injective (for all basepoints) because amenability is preserved under passing to subgroups. 

\smallskip

The Moore--Postnikov truncation (at $\pi_1$), applied seperately to the domain and target of an object in $\Sp^{\to}$, determines a factorization $\widetilde{F}_{\U, \U_A} \to \widetilde{G}:= \mathcal{T}(\widetilde{F}_{\U, \U_A})$. Using our assumptions, it follows that the cochain complexes
$C^{\bullet}_b(G_1(\sigma);\R)$ and $C^{\bullet}_b(G_0(\sigma);\R)$ are concentrated in degree $0$. As a consequence, the limit in $\ch(\R)$ 
$$\lim_{\I_{\U}^{\op}} C^{\bullet}_b(G_1(-); \R)$$ is concentrated in cohomological degrees $\leq d$. On the other hand, we observe that $C^{\bullet}_b(G_0(\sigma);\R) \simeq 0$ if $\sigma \notin \I_{\U_A}$, so the canonical map
$$\lim_{\I_{\U}^{\op}} C^{\bullet}_b(G_0(-); \R) \to \lim_{\I_{\U_A}^{\op}} C^{\bullet}_b(G_0(-); \R)$$
is an equivalence in $\ch(\R)$ and the latter cochain complex is concentrated in cohomological degrees $\leq d_A$. Hence, by passing to the fibers, the relative comparison map $c_{X,A}$ factors through the cochain complex 
$$\mathrm{fib}\big(\lim_{\I_{\U}^{\op}} C^{\bullet}_b(G_1(-);\R) \longrightarrow \lim_{\I_{\U_A}^{\op}} C^{\bullet}_b(G_0(-);\R)\big)$$
whose cohomology is concentrated in degrees $\leq \mathrm{max}\{d, d_A +1\}$. This implies that the relative comparison map (\emph{relative vanishing theorem})
\begin{equation} \tag{5.3(i)}
c^n_{X,A} \colon H^n_b(X, A; \R) \to H^n(X, A;\R)
\end{equation}
vanishes for $n > \mathrm{max}\{d, d_A + 1\}$. Applied to the simplicial volume of an oriented compact connected $n$-manifold $(M, \partial M)$ 
(see also \cite[Proposition 2.18]{LMR}), this vanishing result recovers the relative vanishing theorem in \cite[Theorem 3.13]{LMR} as a special case. 

\smallskip

\noindent In addition, noting that the relative comparison map $c_{X, A}$ factors through the cochain complex 
\begin{equation} \tag{5.3(ii)}
\lim_{\I_{\U}^{\op}} C^{\bullet}_b(G_1(-), G_0(-);\R),
\end{equation}
we observe that for $\sigma \in \I_{\U}$:
\begin{itemize}
\item[(a)] $C^{\bullet}_b(G_1(\sigma), G_0(\sigma);\R)$ is concentrated in cohomological degrees $\leq 1$;
\item[(b)]  $C^{\bullet}_b(G_1(\sigma), G_0(\sigma);\R)$ is concentrated in degree $0$ if and only if the map $\pi_0(U_{\sigma} \cap A) \to \pi_0(U_{\sigma})$ is injective;
\item[(c)]  $C^{\bullet}_b(G_1(\sigma), G_0(\sigma);\R) \simeq \R$, if $U_{\sigma}$ is path-connected and $U_{\sigma} \cap A$ is  empty; 
\item[(d)] $C^{\bullet}_b(G_1(\sigma), G_0(\sigma);\R) \simeq 0$ if and only if the map $\pi_0(U_{\sigma} \cap A) \to \pi_0(U_{\sigma})$ is a bijection. 
\end{itemize}
As a consequence, if each $U_{\sigma}$ or $U_{\sigma} \cap A$ is either empty or path-connected for all finite $\sigma \subseteq I$, then the relative comparison map $c_{X,A}$ factors canonically through 
\begin{align*}
\lim_{\I_{\U}^{\op}} C^{\bullet}_b(G_1(-), G_0(-); \R) & \simeq \mathrm{fib}\big(\lim_{\I_{\U}^{\op}} C^{\bullet}_b(G_1(-); \R) \to \lim_{\I_{\U_{A}}^{\op}}C^{\bullet}_b(G_0(-); \R) \big) \\ & \simeq \mathrm{fib}\big(\lim_{\I_{\U}^{\op}} C^{\bullet}_b(\ast; \R) \to \lim_{\I_{\U_{A}}^{\op}}C^{\bullet}_b(\ast; \R) \big) \\ & \simeq \mathrm{fib}\big(\lim_{\I_{\U}^{\op}} C^{\bullet}(\ast; \R) \to \lim_{\I_{\U_{A}}^{\op}}C^{\bullet}(\ast; \R) \big) \\ & \simeq \mathrm{fib}\big(C^{\bullet}(|\I_{\U}|; \R) \to C^{\bullet}(|\I_{\U_{A}}|; \R) \big) \\ & \simeq C^{\bullet}(|\I_{\U}|, |\I_{\U_A}|; \R).
\end{align*}
More precisely, a close inspection of these identifications (cf. the proof of Theorem \ref{nerve_theorem_general1}) shows that $c_{X,A}$ admits a canonical factorization in $\ch(\R)$ (\emph{relative covering theorem}):
\begin{equation} \tag{5.3(iii)}
\xymatrix{
C^{\bullet}_b(X,A; \R) \ar[rr]^{c_{X,A}} \ar[dr] && C^{\bullet}(X,A; \R) \\
& C^{\bullet}(|\I_{\U}|, |\I_{\U_A}|; \R) \ar[ur]_{\ \ \rho_{F_{\U}, F_{\U_A}}} &
}
\end{equation}
where $\rho_{F_{\U}, F_{\U_A}}$ is induced by the corresponding maps $\rho_{F_{\U}}$ and $\rho_{F_{\U_A}}$ (as defined in Theorem \ref{nerve_theorem_general1} and Section \ref{amenable_covers}).

\smallskip

\noindent On the other hand, if (b) is satisfied for all $\sigma \in \I_{\U}$ and the subposet $$\mathcal{J} = \{\sigma \in \I_{\U} \ | \ \sigma \text{ does not satisfy } (d)\} \subseteq \I_{\U}$$ is upward closed of dimension (of its nerve) denoted by $d_{\mathcal{J}}$, then we have a canonical equivalence in $\ch(\R)$:
\begin{equation} \tag{5.3(iv)}
\lim_{\I_{\U}^{\op}} C^{\bullet}_b(G_1(-), G_0(-); \R) \xrightarrow{\simeq} \lim_{\mathcal{J}^{\op}} C^{\bullet}_b(G_1(-), G_0(-); \R)
\end{equation}
and therefore the comparison map $c_{X,A}$ factors through a cochain complex concentrated in cohomological degrees $\leq d_{\mathcal{J}}$. In this case, the relative comparison map 
\begin{equation} \tag{5.3(v)}
c^n_{X,A} \colon H^n_b(X, A; \R) \to H^n(X, A;\R)
\end{equation} 
vanishes for $n > d_{\mathcal{J}}$. 
\end{application}

\end{document}